\numberwithin{equation}{section}
\theoremstyle{plain}
\newtheorem{thm}{Theorem}[section]
\theoremstyle{plain}
\newtheorem{lem}{Lemma}[section]
\theoremstyle{remark}
\newtheorem{rem}{Remark}[section]
\theoremstyle{plain}
\newtheorem{ass}{Assumption}[section]
\newcommand{\abs}[1]{\left\vert#1\right\vert}
\newcommand{\norm}[1]{\left\Vert#1\right\Vert}
\begin{document}

\begin{frontmatter}
\title{Posterior convergence rates in non-linear latent variable models}
\runtitle{Convergence rates in NL-LVM}

\begin{aug}
\author{\fnms{Debdeep} \snm{pati}\thanksref{e1}\ead[label=e1,mark]{dp55@stat.duke.edu}},
\author{\fnms{Anirban} \snm{Bhattacharya}\thanksref{e2}\ead[label=e2,mark]{ab179@stat.duke.edu}}
\and
\author{\fnms{David} \snm{Dunson}\thanksref{e3}\ead[label=e3,mark]{dunson@stat.duke.edu}}

\address{Department of Statistical Science, Box 90251, Duke University, Durham, NC 27708-0251, USA
\printead{e1,e2,e3}}


%
%

\end{aug}

\begin{abstract}
Non-linear latent variable models have become increasingly popular in a variety of applications. However, there has been little study on theoretical properties of these models. In this article, we study rates of posterior contraction in univariate density estimation for a class of non-linear latent variable models where unobserved $\mbox{U}(0,1)$ latent variables are related to the response variables via a random non-linear regression with an additive error. Our approach relies on characterizing the space of densities induced by the above model as kernel convolutions with a general class of continuous mixing measures. The literature on posterior rates of contraction in density estimation almost entirely focuses on finite or countably infinite mixture models. We develop approximation results for our class of continuous mixing measures. Using an appropriate Gaussian process prior on the unknown regression function, we obtain the optimal frequentist rate up to a logarithmic factor under standard regularity conditions on the true density.
\end{abstract}

\begin{keyword}[class=AMS]
\kwd[Primary ]{62G07}
\kwd{62G20}
\kwd[; secondary ]{60K35}
\end{keyword}

\begin{keyword}
\kwd{Bayesian nonparametrics}
\kwd{Density estimation}
\kwd{Gaussian process}
\kwd{Maximum entropy moment-matching}
\kwd{One factor model}
\kwd{Rate of convergence}
\end{keyword}
\end{frontmatter}

\section{Introduction}
Kernel mixture models are known to be extremely flexible and have been extensively used for density estimation. Starting with a parametric kernel $\mathcal{K}(y, \theta)$, one can obtain a class of densities $f_G$ as
\begin{eqnarray}\label{eq:kernel}
f_G(y) = \int ~ \mathcal{K}(y, \theta) dG(\theta) ,
\end{eqnarray}
where $G(\cdot)$ is a mixing distribution. In particular, by choosing
$G$ to be a discrete distribution with finitely many atoms $\theta_h, h = 1, \ldots, k$ having weights $\pi_h, h= 1, \ldots, k$
with $\sum_{h=1}^k \pi_h = 1$, one obtains the important class of finite mixture models. In a Bayesian framework, one can
induce a prior distribution on the class of densities by assigning a prior to $G$, which amounts to specifying priors on $k$ and $(\theta_h, \pi_h), h = 1, \ldots, k$ in case of finite mixture models.
A Dirichlet process \citep{ferguson1973bayesian,ferguson1974prior} is often used as a default prior on the class of mixing distributions due to its attractive theoretical properties and availability of efficient algorithms for posterior computation. Since realizations of a Dirichlet process are almost surely discrete (see \cite{sethuraman1994constructive} for a constructive definition), a Dirichlet process prior on $G$ induces an infinite discrete mixture model for $f_G$. A well known drawback of finite mixture models is the sensitivity of the results to the choice of $k$, whereas updating $k$ in a fully Bayesian formulation is computationally intensive. The infinite mixture representation avoids fixing a truncation level and sophisticated sampling algorithms such as \cite{walker2007sampling} enable posterior sampling from the full posterior distribution.

Although finite and infinite discrete mixture models have been extensively used, there are reasons to look beyond these classes of models. A discrete prior on $G$ partitions the $n$ subjects into one or more clusters, with subjects in the same cluster sharing the same $\theta$ value. Although this property has been widely exploited for probabilistic clustering, one might want to avoid the clustering phenomenon in situations where the interest is purely in density estimation and one is not interested in interpreting the clusters or in inferring the cluster specific parameters. It is often the case that the clusters don't have any physical significance and subjects get inappropriately grouped together for all parameter values obscuring subtle differences. In such cases, the clustering is more of an artifact of the model and a continuum among the parameter values for the subjects seems more reasonable.

While Polya tree priors \citep{ferguson1974prior,mauldin1992polya} can be directly used to induce priors on the space of absolutely continuous densities \citep{lavine1992some}, the resulting density estimates are found to be spiky in practice. \cite{lenk1988logistic,lenk1991towards} proposed a logistic Gaussian process which bypasses the mixture formulation by directly modeling an unknown density on the unit interval as the exponent of a random function re-normalized, or equivalently modeling the log-density using a Gaussian process prior. The normalizing constant in the logistic Gaussian process models is analytically intractable and causes difficulties in posterior sampling. Refer to \cite{tokdar2007towards} for a faster implementation in density estimation with logistic Gaussian process priors.

Recently, \cite{kundu2011bayesian} proposed an approach for univariate density estimation in which the response variables are modeled as unknown functions of uniformly distributed latent variables with an additive Gaussian error. The latent variable specification allows straightforward posterior computation via conjugate posterior updates. Since inverse c.d.f. transforms of uniform random variables can generate draws from any distribution, by choosing the prior on the error variance to assign positive mass to arbitrary neighborhoods of zero while placing a prior with large support on the space of functions mapping the latent variables to the observed variables (referred to as the \emph{transfer function} from now on), their prior can approximate draws from any continuous distribution function arbitrarily closely. One can also conveniently center the non-parametric model on a parametric family by centering the prior on the transfer function on a parametric class of quantile (or inverse c.d.f.) functions $\{F_{\theta}^{-1} ~:~ \theta \in \Theta\}$. While such centering on parametric guesses can be achieved in Dirichlet process mixture models by appropriate choice of the base measure $G_0$, posterior computation becomes complicated unless the base measure is conjugate to the kernel $\mathcal{K}$.

There has been growing interest in studying asymptotic properties of Bayesian procedures assuming the data are sampled from a fixed unknown distribution. The posterior distribution is said to be strongly consistent if it concentrates almost surely in arbitrarily small $L_1$ neighborhoods of the true distribution with increasing sample size. \cite{ghosal1999posterior} provided general conditions in terms of $L_1$ metric entropy to ensure strong posterior consistency and verified those conditions for Dirichlet process location mixtures of normal kernels under certain regularity conditions. \cite{tokdar2006posterior} extended their result to the location-scale mixture case while encompassing a significantly larger class of ``true'' densities. \cite{ghosal2000convergence} considered the rate of contraction of a posterior distribution to the true density, providing an upper bound on the rate at which one can let the neighborhood size decrease to zero. \cite{ghosal2001entropies} obtained rates of posterior contraction for the Dirichlet process mixture model when the true density is a location-scale mixture of normals with component specific standard deviations bounded between two positive numbers. Although a nearly parametric rate is obtained in this case, the above class of densities is restrictive since one needs the component specific standard deviations to be arbitrarily small for normal mixtures to be able to approximate any smooth density. \cite{ghosal2007posterior} developed a generalization of the basic rate theorem in \citet{ghosal2000convergence} and addressed a broader class of densities, namely, the class of twice continuously differentiable densities. Under some regularity conditions which include the requirement that the true density be compactly supported, they obtained the optimal minimax rate of $n^{-2/5}$ up to a logarithmic factor based on Dirichlet process mixture models. \cite{kruijer2010adaptive} considered finite location-scale mixtures of exponential power distributions and obtained minimax rates of convergence up to a logarithmic factor for any $\beta$-H\"{o}lder density, implying rate adaptivity to any degree of smoothness of the true density.

In this article, we study rates of posterior contraction in univariate density estimation for a class of non-linear latent variable models (NL-LVM) similar to \cite{kundu2011bayesian}. The NL-LVM encompasses a large class of univariate densities and it is straightforward to extend the class for multivariate density estimation and density regression problems. In particular, the NL-LVM has elements in common to Gaussian process latent variable models (GP-LVM) routinely used in machine learning applications for high-dimensional data visualization and dimensionality reduction \citep{lawrence2004gaussian,lawrence2005probabilistic,lawrence2007hierarchical,ferris2007wifi}.
However, the literature on GP-LVM doesn't provide any discussion on the flexibility of their specification in terms of the induced density of the observations after marginalizing out the latent variables. Although \citet{kundu2011bayesian} provide an intuitive argument for large support in the density space for the univariate case, a rigorous characterization of the prior support is missing.
We provide an accurate characterization of the prior support in terms of kernel convolution with a class of continuous mixing measures. We provide conditions for the mixing measure to admit a density with respect to Lebesgue measure and show that the prior support of the NL-LVM is at least as large as that of DP mixture models. We then develop approximation results for the above class of continuous mixing measures and subsequently derive posterior contraction rates assuming standard smoothness assumptions on the true density. Assuming the true density to be twice continuously differentiable, the best obtainable rate is found to be the minimax rate of $n^{-2/5}$ up to a logarithmic factor.  Further, if the prior on the transfer function is centered on a parametric family which happens to  contain the true density, then one gets a faster convergence rate which can be arbitrarily close to the parametric rate of $n^{-1/2}$ up to a logarithmic factor. Also, analogous to the Dirichlet process mixture models, when the true density is a Gaussian convolution with a finite mixture of truncated Gaussians, one can also attain a near parametric convergence rate. 

The main contributions of this article are as follows. (i) The characterization of our model using convolutions implies that one can approximate any continuous density by choosing the transfer function to be the quantile function of the true density and letting the error variance to decrease to zero. When the true density is not compactly supported, the corresponding quantile function is unbounded with discontinuities at $0$ and $1$ and it is not immediate whether a prior for the transfer function supported on $C[0,1]$ (a default choice being a Gaussian process prior) results in the optimal rate. To address this issue, we define a sequence of $C[0,1]$ functions that converge pointwise to the true quantile function and derive concentration bounds for the prior around this sequence. (ii) The traditional approach of approximating the Gaussian convolution of a compactly supported density by discrete normal mixtures isn't well-suited for our purpose since the quantile function of the mixing distribution is a step function which doesn't belong to the sup-norm support of any smooth stochastic process. We develop a technique based on maximum entropy moment matching \citep{mead1984maximum} for approximating a compactly supported density by an infinitely smooth density. Although the above developments are crucially used for our treatment of the non-compact case, we believe these results will be of independent interest.

The rest of the article is organized as follows. We introduce relevant notations and terminologies in Section \ref{sec:notn}. To make the article self-contained, we also provide a brief background on Gaussian process priors. In Section \ref{sec:assf0}, we formulate our assumptions on the true density $f_0$ and in the following section, we describe the NL-LVM model and relate it to convolutions. We state our main theorem on convergence rates for the compact case in Section \ref{sec:compact} and  the non-compact case in Section \ref{sec:noncompact}. We discuss some special cases in Section \ref{sec:splcases}. Section \ref{sec:discussion} discusses some implications of our results and outlines possible future directions.

\section{Notations}\label{sec:notn}

Throughout the article, $Y_1,\ldots, Y_n, \ldots$ are independent and identically
distributed with density $f_0 \in \mathcal{F}$, the set of all densities on $\mathbb{R}$ absolutely continuous with respect to the Lebesgue measure $\lambda$. The supremum and $\mbox{L}_1$-norm are denoted by $\norm{\cdot}_{\infty}$ and $\norm{\cdot}_{1}$, respectively. We let $\norm{\cdot}_{p, \nu}$ denote the norm of
$L_p(\nu)$, the space of measurable functions with $\nu$-integrable $p$th absolute power. For two density functions $f, g \in \mathcal{F}$, let $h$ denote the Hellinger distance defined as $h^2(f, g) = \norm{\sqrt{f}-\sqrt{g}}_{2, \lambda}=\int (f^{1/2} - g^{1/2})^2 d\lambda $, $K(f,g)$ the Kullback-Leibler divergence given by $K(f,g) = \int \log(f/g) f d\lambda$ and $V(f,g)=\int \log(f/g)^2 f d\lambda$. The notation $C[0, 1]$ is used for the space of continuous functions $f : [0, 1] \rightarrow \mathbb{R}$ endowed with the supremum norm. For $\beta >0$, we let $C^{\beta}[0, 1]$ denote the H\"{o}lder space of order $\beta$, consisting of the functions $f \in C[0, 1]$ that have $\lfloor \beta \rfloor$ continuous
derivatives  with the $\lfloor \beta \rfloor$th derivative
$f^{\lfloor \beta \rfloor}$ being Lipschitz continuous of order $\beta -\lfloor \beta \rfloor$. The $\epsilon$-covering number $N(\epsilon,S,d)$ of a semi-metric space $S$ relative to the semi-metric $d$ is the minimal number of balls of radius $\epsilon$ needed to cover $S$. The logarithm of the
covering number is referred to as the entropy.  By near-optimal rate of convergence we mean optimal rate of convergence slowed down by a logarithmic factor.

We write ``$\precsim$'' for inequality up to a constant multiple. Let \linebreak $\phi(x) = (2\pi)^{-1/2}\exp(-x^2/2)$ denote the standard normal density, and let $\phi_{\sigma}(x) = (1/\sigma) \phi(x/\sigma)$. Let an asterisk denote a convolution e.g., $(\phi_{\sigma} * f)(y) = \int \phi_{\sigma}(y - x)f(x)dx$.
The support of a density $f$ is denoted by supp($f$).

We briefly recall the definition of the RKHS of a Gaussian process prior; a detailed review can be found in \cite{van2008reproducing}. A Borel measurable random element $W$ with values in a separable Banach space $(\mathbb{B}, \norm{\cdot})$ (e.g., $C[0,1]$) is called Gaussian if the random variable $b^*W$ is normally distributed for any element $b^* \in \mathbb{B}^*$, the dual space of $\mathbb{B}$. The reproducing kernel
Hilbert space (RKHS) $\mathbb{H}$ attached to a zero-mean Gaussian process $W$ is
defined as the completion of the linear space of functions $t \mapsto EW(t)H$
relative to the inner product
\begin{eqnarray*}
\langle \mbox{E} W(\cdot)H_1; \mbox{E} W(\cdot )H_2\rangle_{\mathbb{H}} = \mbox{E} H_1H_2,
\end{eqnarray*}
where $H, H_1$ and $H_2$ are finite linear combinations of the form $\sum_{i}a_{i}W(s_i)$
with $a_i \in \mathbb{R}$ and $s_i$ in the index set of $W$.

\section{Assumptions on the true density}\label{sec:assf0}

It has been widely recognized that one needs certain smoothness assumptions and tail conditions on the true density $f_0$ to derive posterior convergence rates at $f_0$. We need the following assumptions in our case,
\begin{ass}\label{ass:1}
$f_0$ is twice continuously differentiable with $\int (f_0^{\prime\prime}/f_0)^2f_0d\lambda  < \infty$ and
$\int(f_0^{\prime}/f_0)^4f_0d\lambda < \infty$.
\end{ass}
\begin{rem}\label{rem:ass:1}
Letting $f_0(y) = C\exp\{ - w_0(y)\}$ on $\mbox{supp}(f_0)$ so that $w_0 = \log C - \log f_0(y)$, we can restate Assumption \ref{ass:1}  as  $w_0$ being twice continuously differentiable and
\begin{eqnarray}\label{eq:tails}
\int_{-\infty}^{\infty}\{w_{0}^{\prime}(y)\}^4\exp\{ -w_0(y)\} < \infty, \, \int_{-\infty}^{\infty}\{w_{0}^{\prime\prime}(y)\}^2\exp\{ -w_0(y)\} < \infty.
\end{eqnarray}
\end{rem}

\begin{ass}\label{ass:2}
$f_0$ is bounded, nondecreasing on $(-\infty, a]$, bounded away from $0$ on $[a,b]$ and non-increasing on $[b, \infty)$ for some $a \leq b$.
\end{ass}
Assumption \ref{ass:1} is the same as Assumption 1.2 of \cite{ghosal2007posterior} and ensures that $h(f_0, f_0*\phi_{\sigma}) = O(\sigma^2)$ as $\sigma \rightarrow 0$; see Lemma 4 of \cite{ghosal2007posterior} for a proof. Assumption \ref{ass:2} is the same as the assumption in Lemma 6 of the same paper. This is sufficient to guarantee that for every $\delta > 0$, there exists a constant $C > 0$ such that $f_0 * \phi_{\sigma} \geq Cf_0$ for every $\sigma < \delta$.
While Assumption \ref{ass:1} only allows sufficiently smooth densities, Assumption \ref{ass:2} is only a mild requirement in the sense that most reasonable densities arising in practice should satisfy it. Moreover, if $f_0$ is nondecreasing on $(-\infty, a]$ and nonincreasing on $[b, \infty]$ for some $a \leq b$, $f_0$ is automatically bounded and bounded away from zero on $[a, b]$ provided it is continuous and no-where zero on $[a, b]$.

\section{The NL-LVM model}\label{sec:model}

Consider the nonlinear latent variable model,
\begin{align}\label{eq:nl-lvm}
y_i &= \mu(\eta_i) + \epsilon _i, \quad \epsilon_i \sim \mbox{N}(0, \sigma^2), \, (i=1, \ldots, n)\\
\mu &\sim \Pi_{\mu}, \quad \sigma \sim \Pi_{\sigma},\quad \eta_i \sim \mbox{U}(0,1),
\end{align}
where $\eta_i$'s are subject specific latent variables, $\mu \in C[0, 1]$ is a \emph{transfer function} relating the latent variables to the observed variables and $\epsilon_i$ is an idiosyncratic error specific to subject $i$. The density of $y$ conditional on the transfer function $\mu$ and scale $\sigma$ is obtained on marginalizing out the latent variable as
\begin{eqnarray}\label{eq:gpt_def}
f(y; \mu, \sigma) \stackrel{\text{def}}{=} f_{\mu, \sigma}(y)= \int_{0}^{1}\phi_{\sigma}(y-\mu(x))dx.
\end{eqnarray}
Define a map $g: C[0,1] \times [0,\infty) \to \mathcal{F}$ with $g(\mu, \sigma) = f_{\mu, \sigma}$.
One can induce a prior $\Pi$ on $\mathcal{F}$ via the mapping $g$ by placing independent priors
$\Pi_{\mu}$ and $\Pi_{\sigma}$ on $C[0,1]$ and $[0, \infty)$ respectively, with $\Pi = (\Pi_{\mu} \otimes \Pi_{\sigma}) \circ g^{-1}$. \cite{kundu2011bayesian} assumed a Gaussian process prior with squared exponential covariance kernel on $\mu$ and an inverse-gamma prior on $\sigma^2$.

It is not immediately clear whether the class of densities $f_{\mu, \sigma}$ in the range of $g$ encompass a large subset of the density space. We  provide an intuition that relates the above class with convolutions and is crucially used later on. Let $f_0$ be a continuous density with cumulative distribution function $F_0(t) = \int_{-\infty}^{t} f_0(x) dx$. Assume $f_0$ to be non-zero almost everywhere within its support, so that $F_0 : \mbox{supp}(f_0) \to [0,1]$ is strictly monotone and hence has an inverse $F_0^{-1} : [0,1] \to \mbox{supp}(f_0)$ satisfying $F_0\{F_0^{-1}(t)\} = t$ for all $t \in \mbox{supp}(f_0)$. If $\mbox{supp}(f_0) = \mathbb{R}$, then the domain of $F_0^{-1}$ is the open interval $(0,1)$ instead of $[0,1]$.

Letting $\mu_0(x) = F_0^{-1}(x)$, one obtains
\begin{eqnarray}\label{eq:conv}
f_{\mu_0, \sigma}(y) = \int_{0}^{1} \phi_{\sigma}(y-F_0^{-1}(x))dx = \int_{-\infty}^{\infty} \phi_{\sigma}(y-t) f_0(t) dt,
\end{eqnarray}
where the second equality follows from the change of variable theorem. Thus, $f_{\mu_0,\sigma}(y) = \phi_{\sigma}*f_0$, i.e., $f_{\mu_0, \sigma}$ is the convolution of $f_0$ with a normal density having mean $0$ and standard deviation $\sigma$. It is well known that the convolution $\phi_{\sigma}*f_0$ can approximate $f_0$ arbitrary closely as the bandwidth $\sigma \to 0$. More precisely, for $f_0 \in L^p(\lambda)$ for any $p \geq1$, $\norm{\phi_{\sigma}*f_0 - f_0}_{p, \lambda} \to 0$ as $\sigma \to 0$. Furthermore, a stronger result $\norm{\phi_{\sigma}*f_0 - f_0}_{\infty} = O(\sigma^2)$ holds if $f_0$ is compactly supported.
A similar result holds for the Hellinger metric, with the precise approximation error under Assumption \ref{ass:1} given by $h(\phi_{\sigma}*f_0, f_0) = O(\sigma^2)$ as $\sigma \to 0$.

Suppose the prior $\Pi_{\mu}$ on $\mu$ has full sup-norm support on $C[0,1]$ so that $\mbox{Pr}(\norm{\mu - \mu^*}_{\infty} < \epsilon) > 0$ for any $\epsilon > 0$ and $\mu^* \in C[0,1]$, and the prior $\Pi_{\sigma}$ on $\sigma$ has full support on $[0, \infty)$. If $f_0$ is compactly supported so that the quantile function $\mu_0 \in C[0,1]$, then it can be shown that under mild conditions, the induced prior $\Pi$ assigns positive mass to arbitrarily small $L_1$ neighborhoods of any density $f_0$. When $f_0$ has full support on $\mathbb{R}$, the quantile function $\mu_0$ is unbounded near $0$ and $1$, so that $\norm{\mu_0}_{\infty} = \infty$. However, $\int_{0}^{1} \abs{\mu_0(t)} dt = \int_{\mathbb{R}} \abs{x} f_0(x) dx$, which implies that $\mu_0$ can be identified as an element of $L_1[0,1]$ if $f_0$ has finite first moment. Since $C[0,1]$ is dense in $L_1[0,1]$, the previous conclusion regarding $L_1$ support can be shown to hold in the non-compact case too. We summarize the above discussion in the following theorem, with a proof provided in the appendix.
\begin{thm}\label{thm:support}
If $\Pi_{\mu}$ has full sup-norm support on $C[0,1]$ and $\Pi_{\sigma}$ has full support on $[0, \infty)$, then the $L_1$ support of the induced prior $\Pi$ on $\mathcal{F}$ contains all densities $f_0$ which have a finite first moment and are non-zero almost everywhere on their support.
\end{thm}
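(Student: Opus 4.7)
The plan is to show that for any $\epsilon > 0$ the set $\{(\mu,\sigma) : \|f_{\mu,\sigma} - f_0\|_1 < \epsilon\}$ receives positive mass under $\Pi_\mu \otimes \Pi_\sigma$. Set $\mu_0 = F_0^{-1}$. By the triangle inequality,
\begin{equation*}
\|f_{\mu,\sigma} - f_0\|_1 \le \|f_{\mu,\sigma} - f_{\mu_0,\sigma}\|_1 + \|\phi_\sigma * f_0 - f_0\|_1,
\end{equation*}
where the identification $f_{\mu_0,\sigma} = \phi_\sigma * f_0$ is the convolution identity \eqref{eq:conv}. The second term tends to zero as $\sigma \to 0$ by the standard $L^1$-approximation property of mollifiers for $f_0 \in L^1(\lambda)$. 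Hence I can fix a small interval $I \subset (0,\infty)$ on which this term is at most $\epsilon/2$, and $\Pi_\sigma(I) > 0$ by full support.

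The first term is handled by Jensen/Fubini together with a quantitative Lipschitz-in-translation bound. Writing $\phi_\sigma(y-b) - \phi_\sigma(y-a) = -\int_a^b \phi_\sigma'(y-t)\,dt$, Fubini gives
\begin{equation*}
\|\phi_\sigma(\cdot-a) - \phi_\sigma(\cdot-b)\|_1 \le |a-b|\cdot \|\phi_\sigma'\|_1 = \sqrt{2/\pi}\,|a-b|/\sigma.
\end{equation*}
Integrating over the latent variable,
\begin{equation*}
\|f_{\mu,\sigma} - f_{\mu_0,\sigma}\|_1 \le \int_0^1 \|\phi_\sigma(\cdot - \mu(x)) - \phi_\sigma(\cdot - \mu_0(x))\|_1\,dx \le \frac{C}{\sigma}\,\|\mu - \mu_0\|_{1,\lambda|_{[0,1]}},
\end{equation*}
with $C = \sqrt{2/\pi}$. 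So it suffices, for a given $\sigma$ in $I$, to produce a set of $\mu$'s with positive $\Pi_\mu$-mass on which $\|\mu - \mu_0\|_{1,\lambda|_{[0,1]}}$ is arbitrarily small.

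In the compactly supported case, $\mu_0 \in C[0,1]$, so the sup-norm support hypothesis on $\Pi_\mu$ supplies a neighborhood $\{\mu : \|\mu - \mu_0\|_\infty < \delta\}$ of positive probability, and $\|\mu - \mu_0\|_{1,\lambda|_{[0,1]}} \le \|\mu - \mu_0\|_\infty < \delta$ finishes the step. The genuine obstacle is the non-compact case, where $\mu_0$ has vertical asymptotes at $0$ and $1$ and therefore lies outside the topological support of any prior concentrated on $C[0,1]$. I would resolve this by an intermediate approximation: since the finite first moment assumption gives $\int_0^1 |\mu_0(x)|\,dx = \int_{\mathbb{R}} |y| f_0(y)\,dy < \infty$, we have $\mu_0 \in L^1[0,1]$; by density of $C[0,1]$ in $L^1[0,1]$, pick $\mu^* \in C[0,1]$ with $\|\mu^* - \mu_0\|_{1,\lambda|_{[0,1]}} < \delta/2$. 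Then the $\Pi_\mu$-positive set $\{\mu : \|\mu - \mu^*\|_\infty < \delta/2\}$ is contained in $\{\mu : \|\mu - \mu_0\|_{1,\lambda|_{[0,1]}} < \delta\}$, which, combined with the display above, yields $\|f_{\mu,\sigma} - f_{\mu_0,\sigma}\|_1 < C\delta/\sigma$. Choosing $\delta$ small enough relative to $\inf I$ makes this smaller than $\epsilon/2$, and independence of $\Pi_\mu$ and $\Pi_\sigma$ then gives positive prior mass to the $L_1$-neighborhood, completing the argument.
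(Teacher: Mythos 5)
Your proof is correct and takes essentially the same route as the paper's: both identify $f_{\mu_0,\sigma}=\phi_\sigma * f_0$, use the mollifier property to make $\|\phi_\sigma * f_0 - f_0\|_1$ small on a $\sigma$-interval of positive $\Pi_\sigma$-mass, control $\|f_{\mu,\sigma}-f_{\mu_0,\sigma}\|_1$ via a translation-Lipschitz bound of order $\|\mu-\mu_0\|_1/\sigma$, and handle the non-compact case by replacing $\mu_0$ with a continuous $L^1[0,1]$-approximation $\mu^*$ and invoking full sup-norm support of $\Pi_\mu$ around $\mu^*$. The only (cosmetic) difference is that you fold the $\mu_0\to\mu^*\to\mu$ step into a single $L^1$ estimate rather than writing out the paper's three-term triangle inequality, and you derive the bound $\|\phi_\sigma'\|_1=\sqrt{2/\pi}/\sigma$ explicitly where the paper just quotes the constant.
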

\begin{rem}
The conditions of Theorem \ref{thm:support} are satisfied for a wide range of Gaussian process priors on $\mu$ (for example, a GP with a squared exponential or Mat\'{e}rn covariance kernel).
\end{rem}

Let $\tilde{\lambda}$ denote the Lebesgue measure on $[0,1]$, or equivalently, the $\mbox{U}[0,1]$ distribution. For any measurable function $\mu : [0,1] \to \mathbb{R}$, let $\nu_{\mu}$ denote the induced measure on $(\mathbb{R}, \mathcal{B})$, with $\mathcal{B}$ denoting the Borel sigma-field on $\mathbb{R}$. Then, for any Borel measurable set $B$, $\nu_{\mu}(B) = \tilde{\lambda}(\mu^{-1}(B))$, where $\mu^{-1}(B) = \{x \in [0,1] ~ : ~ \mu(x) \in B\}$. By the change of variable theorem for induced measures,
\begin{eqnarray}\label{eq:dpgp}
\int_{0}^{1}\phi_{\sigma}(y-\mu(x))dx = \int \phi_{\sigma}(y-t) d\nu_{\mu}(t),
\end{eqnarray}
so that $f_{\mu, \sigma}$ can be expressed as a kernel mixture form as in (\ref{eq:kernel}) with mixing distribution $\nu_{\mu}$. It turns out that this mechanism of creating random distributions is very general. Depending on the choice of $\mu$, one can create a large variety of mixing distributions based on this specification. For example, if $\mu$ is a strictly monotone function, then $\nu_{\mu}$ is absolutely continuous with respect to the Lebesgue measure, while choosing $\mu$ to be a step function, one obtains a discrete mixing distribution. However, it is easier to place a prior on $\mu$ supported on the space of continuous functions $C[0, 1]$ without further shape restrictions and Theorem \ref{thm:support} assures us that this specification leads to large $L_1$ support on the space of densities.

\section{The compact case}\label{sec:compact}

We first consider the case where $f_0$ is compactly supported, i.e., there exist $-\infty < a_0 < b_0 < \infty$ such that $\int_{a_0}^{b_0}f_0(x)=1$. In that case, the quantile function $F_0^{-1}: [0,1] \to [a_0, b_0]$ is a continuous monotone function inheriting the smoothness of $f_0$. Denote the quantile function by $\mu_0$. Assumption \ref{ass:1} ensures that the compactly supported density decays smoothly at the boundaries. Under Assumption \ref{ass:1} and the fundamental theorem of calculus, $\mu_0:[0,1] \to [a_0,b_0]$ is thrice continuously differentiable implying $\mu_0 \in C^{3}[0,1]$.

\subsection{Prior specification}

We now mention our choices for the prior distributions $\Pi_{\mu}$ and $\Pi_{\sigma}$.
\begin{ass}\label{ass:1p}
We assume $\mu$ follows a centered Gaussian process denoted by $\mbox{GP}(0, c)$, with a squared exponential covariance kernel $c(\cdot, \cdot; A)$ and a Gamma prior for the inverse-bandwidth $A$. Thus
$c(t, s; A) = e^{-A(t-s)^2}, t,s \in [0, 1], A \sim \mbox{Ga}(p,q)$.
\end{ass}
\begin{ass}\label{ass:2p}
We assume $\sigma \sim \mbox{IG}(a_{\sigma}, b_{\sigma})$.
\end{ass}
Note that contrary to the usual conjugate choice of an inverse-Gamma prior for $\sigma^2$, we have assumed an inverse-Gamma prior for $\sigma$.  This enables one to have slightly more prior mass near zero compared to an inverse-Gamma prior for $\sigma^2$, leading to the optimal rate of posterior convergence. Refer also to \cite{kruijer2010adaptive} for a similar prior choice for the bandwidth of the kernel in discrete location-scale mixture priors for densities.

\subsection{Posterior convergence rate for the compact case}

We state below the main theorem of posterior convergence rates.
 \begin{thm}\label{thm:compact}
If $f_0$ satisfies Assumption \ref{ass:1} and the priors $\Pi_\mu$ and $\Pi_{\sigma}$ are as in
Assumptions \ref{ass:1p} and \ref{ass:2p} respectively, the best obtainable rate of posterior convergence relative to $h$ is
\begin{eqnarray}\label{eq:optrate}
\epsilon_{n} = n^{-\frac{2}{5}}\log n.
\end{eqnarray}
\end{thm}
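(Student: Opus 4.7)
\medskip

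\noindent\textbf{Proof proposal.} I would invoke the standard posterior contraction machinery of Ghosal, Ghosh, and van der Vaart (2000) in the refined form of Ghosal and van der Vaart (2007): with $\epsilon_n = n^{-2/5}\log n$, it suffices to construct a sequence of sieves $\mathcal{F}_n \subset \mathcal{F}$ such that (i) the prior mass of a Kullback-Leibler neighborhood satisfies $\Pi\{f : K(f_0,f) \le \epsilon_n^2,\, V(f_0,f) \le \epsilon_n^2\} \ge e^{-c n\epsilon_n^2}$, (ii) $\log N(\epsilon_n,\mathcal{F}_n,h) \lesssim n\epsilon_n^2$, and (iii) $\Pi(\mathcal{F}_n^c) \le e^{-(c+4)n\epsilon_n^2}$. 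The driving choice is $\sigma_n \asymp n^{-1/5}$, which balances the convolution bias $h(f_0, f_{\mu_0,\sigma_n}) = O(\sigma_n^2)$ (Lemma~4 of Ghosal--van der Vaart, 2007, applicable under Assumption~\ref{ass:1}) against the variance from approximating $\mu_0 = F_0^{-1} \in C^3[0,1]$ by the Gaussian process.

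For the prior concentration step, I would first establish a quantitative perturbation bound of the form $K(f_{\mu_0,\sigma_n}, f_{\mu,\sigma_n}) + V(f_{\mu_0,\sigma_n}, f_{\mu,\sigma_n}) \lesssim \|\mu - \mu_0\|_\infty^2/\sigma_n^2$ whenever $\|\mu - \mu_0\|_\infty \le \sigma_n$; here the compact support of $f_0$ permits a uniform lower bound on $f_{\mu,\sigma_n}$ on a fixed compact set, and a Gaussian tail argument controls the tails. Combined with the bias estimate, this reduces the KL ball condition to $\|\mu - \mu_0\|_\infty \lesssim \sigma_n\epsilon_n$ and $|\sigma - \sigma_n| \lesssim \sigma_n\epsilon_n$. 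For the GP part I would invoke the concentration function estimate of van der Vaart and van Zanten (2009) for the squared exponential kernel with a Gamma prior on the inverse-bandwidth: for $\mu_0 \in C^3[0,1]$ one gets $-\log\Pi_\mu\{\|\mu - \mu_0\|_\infty < \eta\} \lesssim \eta^{-1/3}(\log(1/\eta))^{4}$, which at $\eta \asymp n^{-3/5}$ yields $\lesssim n^{1/5}(\log n)^4 \asymp n\epsilon_n^2$. The $\sigma$-factor $\Pi_\sigma\{|\sigma - \sigma_n| \le \sigma_n\epsilon_n\}$ is $\gtrsim e^{-c'\log n}$ for the $\mathrm{IG}(a_\sigma,b_\sigma)$ prior, which is absorbed in $e^{-cn\epsilon_n^2}$.

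For the sieve, I would take $\mathcal{F}_n = g(\mathcal{M}_n \times [\underline{\sigma}_n, \overline{\sigma}_n])$ where $\mathcal{M}_n = \bigcup_{a \le a_n}(M_n\,\mathbb{H}_1^a + \eta_n\mathbb{B}_1)$ is the standard GP sieve from van der Vaart--van Zanten (with $\mathbb{H}_1^a$ the unit RKHS ball at bandwidth $a$, $\mathbb{B}_1$ the sup-norm unit ball, and $a_n, M_n, \eta_n$ calibrated to $\epsilon_n$), and $\underline{\sigma}_n = n^{-M}$, $\overline{\sigma}_n = e^{c'n\epsilon_n^2}$. The prior mass outside $\mathcal{F}_n$ decomposes into the GP tail (controlled by Borell's inequality plus the Gamma tail on $A$) and the IG tails on $\sigma$, both of which are $e^{-Cn\epsilon_n^2}$ for the parameters chosen. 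For the entropy, the Lipschitz bound $h(f_{\mu_1,\sigma_1}, f_{\mu_2,\sigma_2}) \lesssim \|\mu_1-\mu_2\|_\infty/\underline{\sigma}_n + |\sigma_1-\sigma_2|/\underline{\sigma}_n$ (obtained by differentiating the Gaussian kernel under the integral) transfers sup-norm entropy on $\mathcal{M}_n$ into Hellinger entropy on $\mathcal{F}_n$, and the van der Vaart--van Zanten covering number bound on $\mathcal{M}_n$ gives $\log N(\epsilon_n,\mathcal{F}_n,h) \lesssim n\epsilon_n^2$.

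I expect the main technical obstacle to be the KL perturbation bound in Step~1: although $h(f_{\mu_0,\sigma_n}, f_{\mu,\sigma_n}) \lesssim \|\mu-\mu_0\|_\infty/\sigma_n$ follows easily from Cauchy-Schwarz, upgrading Hellinger to KL and second-order log-ratio requires a lower envelope for $f_{\mu,\sigma_n}$ on the effective support of $f_0$ and a polynomial upper envelope on the ratio $f_0/f_{\mu,\sigma_n}$ in the tails, both of which must be established uniformly in $\mu$ over the concentration neighborhood. Once this bound is in place, the three conditions line up cleanly and the theorem follows from the general rate theorem.
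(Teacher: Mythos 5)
Your overall plan coincides with the paper's proof: the same Ghosal--Ghosh--van~der~Vaart rate theorem with the same three ingredients, the bias balance at $\sigma_n \asymp n^{-1/5}$ via $h(f_0, \phi_{\sigma_n}*f_0) = O(\sigma_n^2)$, the van~der~Vaart--van~Zanten small-ball estimate for $\mu_0 = F_0^{-1} \in C^3[0,1]$ at radius $\sigma_n^3$, and their rescaled-Gaussian-process sieve paired with an inverse-gamma window on $\sigma$.

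One step does not go through as stated. You propose bounding $K(f_{\mu_0,\sigma_n}, f_{\mu,\sigma_n})$ and $V(f_{\mu_0,\sigma_n}, f_{\mu,\sigma_n})$ --- whose base measure is $f_{\mu_0,\sigma_n}$ --- and then ``combining with the bias estimate.'' Neither $K$ nor $V$ obeys a triangle inequality, so $K(f_0, f_{\mu_0,\sigma_n}) + K(f_{\mu_0,\sigma_n}, f_{\mu,\sigma_n})$ does not bound $K(f_0, f_{\mu,\sigma_n})$, and the base measures do not match. The paper avoids chaining entirely: it applies Lemma~8 of Ghosal--van~der~Vaart directly to the pair $(f_0, f_{\mu,\sigma})$, i.e.\ $\int f_0 \log(f_0/f_{\mu,\sigma})^i \leq h^2(f_0, f_{\mu,\sigma})\big(1 + \log\norm{f_0/f_{\mu,\sigma}}_\infty\big)^i$ for $i=1,2$, then bounds $h^2(f_0,f_{\mu,\sigma}) \precsim \sigma^4 + \norm{\mu-\mu_0}_\infty^2/\sigma^2$ by the Hellinger triangle inequality plus Lemma~\ref{lem:hellinger}, and bounds $\log\norm{f_0/f_{\mu,\sigma}}_\infty \precsim 1 + \norm{\mu - \mu_0}_\infty^2/\sigma^2$ (Lemma~\ref{lem:logsup}) using $\phi_{\sigma}*f_0 \geq C f_0$. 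You do identify the right ingredients in your last paragraph --- a lower envelope for $f_{\mu,\sigma}$ and an upper envelope on $f_0/f_{\mu,\sigma}$ --- but notice the ratio to control is $f_0/f_{\mu,\sigma}$ with base measure $f_0$, not $f_{\mu_0,\sigma_n}/f_{\mu,\sigma_n}$, so the perturbation quantity you introduce in Step~1 is not the object actually needed. Separately, the inverse-gamma mass estimate is off: for $\sigma \sim \mbox{IG}(a_\sigma, b_\sigma)$ and $\sigma_n = n^{-1/5}$ the mass of a window near $\sigma_n$ is of order $\exp(-b_\sigma/\sigma_n) = \exp(-cn^{1/5})$, not $\exp(-c'\log n)$, because the density decays like $e^{-b_\sigma/\sigma}$ near zero; this is still $\exp(-cn\tilde\epsilon_n^2)$ and hence still admissible (and is precisely the reason the inverse-gamma is placed on $\sigma$ rather than $\sigma^2$), but getting the order right matters for verifying (\ref{eq3}). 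Finally, the tight constraint $\abs{\sigma-\sigma_n} \precsim \sigma_n\epsilon_n$ is unnecessary; a window $[\sigma_n, 2\sigma_n]$ suffices since the bias bound is used at the actual $\sigma$, not chained through $\sigma_n$.
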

The proof of Theorem \ref{thm:compact} is based on \citet{ghosal2007posterior} and \linebreak \citet{van2007bayesian,van2008reproducing,van2009adaptive}. Unlike the treatment in discrete mixture models \citep{ghosal2007posterior} where a compactly supported density is approximated with a discrete mixture of normals, the main trick here is to approximate the true density $f_0$ by the convolution $\phi_{\sigma}*f_0$ and allow the prior on the transfer function to appropriately concentrate around the true quantile function $\mu_0 \in C[0,1]$.

To guarantee that the above scheme leads to the optimal rate of convergence, we first derive sharp bounds for the Hellinger distance between $f_{\mu_1, \sigma_1}$ and $f_{\mu_2, \sigma_2}$ for $\mu_1, \mu_2 \in C[0, 1]$ and $\sigma_1, \sigma_2 > 0$. We summarize the result in the following Lemma \ref{lem:hellinger}.
\begin{lem}\label{lem:hellinger}
For $\mu_1, \mu_2 \in C[0, 1]$ and $\sigma_1, \sigma_2 > 0$,
\begin{eqnarray}
h^2(f_{\mu_1,\sigma_1}, f_{\mu_2, \sigma_2}) \leq 1- \sqrt{\frac{2\sigma_1\sigma_2}{\sigma_1^2 + \sigma_2^2}}\exp\bigg\{-\frac{\norm{\mu_1 - \mu_2}_{\infty}^2}{4(\sigma_1^2 + \sigma_2^2)}\bigg\}.
\end{eqnarray}
\end{lem}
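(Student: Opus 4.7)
The plan is to reduce $h^2$ to the Hellinger (Bhattacharyya) affinity and then exploit the mixture representation $f_{\mu,\sigma}(y)=\int_0^1 \phi_\sigma(y-\mu(x))\,dx$ to pull the square root inside the latent integral. Starting from $h^2(f_{\mu_1,\sigma_1},f_{\mu_2,\sigma_2})=2-2\int \sqrt{f_{\mu_1,\sigma_1}\,f_{\mu_2,\sigma_2}}\,d\lambda$, the problem becomes one of lower-bounding the affinity in a form that separates the bandwidths from the transfer functions.

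The key manoeuvre is a Cauchy--Schwarz inequality in the latent variable $x$ with respect to Lebesgue measure on $[0,1]$: applied to $\sqrt{\phi_{\sigma_1}(y-\mu_1(\cdot))}$ and $\sqrt{\phi_{\sigma_2}(y-\mu_2(\cdot))}$, it gives
\begin{equation*}
\int_0^1 \sqrt{\phi_{\sigma_1}(y-\mu_1(x))\,\phi_{\sigma_2}(y-\mu_2(x))}\,dx \;\le\; \sqrt{f_{\mu_1,\sigma_1}(y)\,f_{\mu_2,\sigma_2}(y)}
\end{equation*}
for every $y\in\mathbb{R}$. Integrating in $y$ and swapping the order of integration via Tonelli's theorem (the integrand is nonnegative) then yields
\begin{equation*}
\int \sqrt{f_{\mu_1,\sigma_1}\,f_{\mu_2,\sigma_2}}\,dy \;\ge\; \int_0^1 \left[ \int \sqrt{\phi_{\sigma_1}(y-\mu_1(x))\,\phi_{\sigma_2}(y-\mu_2(x))}\,dy \right] dx.
\end{equation*}

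The inner integral, for each fixed $x$, is the Hellinger affinity between the Gaussian densities $\mathrm{N}(\mu_1(x),\sigma_1^2)$ and $\mathrm{N}(\mu_2(x),\sigma_2^2)$; a standard completion of the square in the exponent gives the closed form $\sqrt{2\sigma_1\sigma_2/(\sigma_1^2+\sigma_2^2)}\exp\bigl\{-(\mu_1(x)-\mu_2(x))^2/[4(\sigma_1^2+\sigma_2^2)]\bigr\}$. I would then replace the pointwise squared difference by its uniform bound $\|\mu_1-\mu_2\|_\infty^2$, which lowers the exponent and produces an integrand constant in $x$; the outer $dx$-integration is then trivial. Substituting the resulting lower bound on the affinity back into the identity for $h^2$ delivers the stated inequality.

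The only delicate step is the Cauchy--Schwarz application that allows the square root to pass through the two mixture integrals; once this is in place, the remaining ingredients are the explicit Gaussian affinity and a monotone sup-norm bound, neither of which poses any real difficulty. I do not anticipate a substantive obstacle here---the argument is essentially a clean three-line exchange-of-integrals combined with a textbook Gaussian computation.
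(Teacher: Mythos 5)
Your argument is the same as the paper's: the paper's application of H\"older's inequality with exponents $p=q=2$ is exactly your Cauchy--Schwarz step, the Fubini/Tonelli swap and the closed-form Gaussian Hellinger affinity are identical, and so is the final sup-norm bound. One small remark worth flagging for both your write-up and the paper's: with the stated convention $h^2(f,g)=\int(\sqrt f-\sqrt g)^2=2-2\int\sqrt{fg}$, the argument actually delivers $h^2\le 2-2\sqrt{2\sigma_1\sigma_2/(\sigma_1^2+\sigma_2^2)}\exp\{-\|\mu_1-\mu_2\|_\infty^2/[4(\sigma_1^2+\sigma_2^2)]\}$, which differs from the displayed $1-(\cdot)$ form by a factor of two (the paper silently treats the Gaussian Hellinger-squared distance as $1-A$ rather than $2-2A$); this is immaterial for the $\precsim$ bounds used downstream, but neither derivation literally produces the stated constant.
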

\begin{proof}
Note that by H\"{o}lder's inequality,
\begin{eqnarray*}
f_{\mu_1, \sigma_1}(y)f_{\mu_2, \sigma_2}(y) \geq \bigg\{\int_{0}^{1} \sqrt{\phi_{\sigma_1}(y - \mu_1(x))} \sqrt{\phi_{\sigma_2}(y - \mu_2(x))}dx\bigg\}^2.
\end{eqnarray*}
Hence,
\begin{eqnarray*}
h^2(f_{\mu_1,\sigma_1}, f_{\mu_2, \sigma_2}) &\leq& \int\bigg[\int_{0}^{1}\phi_{\sigma_1}(y-\mu_1(x))dx  + \int_{0}^{1}\phi_{\sigma_2}(y-\mu_2(x))dx \\
&-&2\int_{0}^{1}\sqrt{\phi_{\sigma_1}(y - \mu_1(x))} \sqrt{\phi_{\sigma_2}(y - \mu_2(x))}dx\bigg]dy.
\end{eqnarray*}
By changing the order of integration (applying Fubini's theorem since the function within the integral is jointly integrable) we get,
\begin{eqnarray*}
h^2(f_{\mu_1,\sigma_1}, f_{\mu_2, \sigma_2}) &\leq& \int_{0}^{1}h^2(f_{\mu_1(x),\sigma_1}, f_{\mu_2(x), \sigma_2})dx \\
&=& \int_{0}^{1}\bigg[1- \sqrt{\frac{2\sigma_1\sigma_2}{\sigma_1^2 + \sigma_2^2}}\exp\bigg\{-\frac{(\mu_1(x) - \mu_2(x))^2}{4(\sigma_1^2 + \sigma_2^2)}\bigg\}\bigg]dx\\
&\leq& 1- \sqrt{\frac{2\sigma_1\sigma_2}{\sigma_1^2 + \sigma_2^2}}\exp\bigg\{-\frac{\norm{\mu_1 - \mu_2}_{\infty}^2}{4(\sigma_1^2 + \sigma_2^2)}\bigg\}.
\end{eqnarray*}

\end{proof}

\begin{rem}
When $\sigma_1 = \sigma_2  = \sigma$, $h^2(f_{\mu_1,\sigma}, f_{\mu_2, \sigma}) \leq 1 - \exp\big\{\norm{\mu_1 -\mu_2}_{\infty}^2/ 8 \sigma^2\big\}$, which implies that $h^2(f_{\mu_1,\sigma}, f_{\mu_2, \sigma}) \precsim \norm{\mu_1 -\mu_2}_{\infty}^2/\sigma^2$.
\end{rem}

\begin{rem}
Note that if we had used $h^2(f_{\mu_1,\sigma_1}, f_{\mu_2, \sigma_2}) \leq \norm{f_{\mu_1,\sigma_1}- f_{\mu_2, \sigma_2}}_1$, we would have obtained the cruder bound
\begin{eqnarray*}
h^2(f_{\mu_1,\sigma_1}, f_{\mu_2, \sigma_2}) \leq C_1 \frac{\norm{\mu_1 -\mu_2}_{\infty}}{(\sigma_1 \wedge \sigma_2)} + C_2\frac{|\sigma_2 - \sigma_1|}{(\sigma_1 \wedge \sigma_2)},
 \end{eqnarray*}
which is linear in $\norm{\mu_1 -\mu_2}_{\infty}$ for some constant $C_1, C_2 > 0$. This bound is less sharp than what is obtained in Lemma \ref{lem:hellinger} and does not suffice for obtaining the optimal rate of convergence.
\end{rem}

To control the Kullback-Leibler distance between the true density $f_0$ and the model $f_{\mu, \sigma}$, we derive an upper bound for $\log \norm{\frac{f_0}{f_{\mu, \sigma}}}_{\infty}$ in Lemma \ref{lem:logsup}.
\begin{lem}\label{lem:logsup}
If $f_0$ satisfies Assumption \ref{ass:2},
\begin{eqnarray}
\log \norm{\frac{f_0}{f_{\mu, \sigma}}}_{\infty} \leq C_6 + \frac{\norm{\mu - \mu_0}_{\infty}^2}{\sigma^2}
\end{eqnarray}
for some constant $C_6 > 0$.
\end{lem}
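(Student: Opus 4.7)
My plan is to lower bound $f_{\mu,\sigma}(y)$ pointwise by a multiple of $f_0(y)$ in two steps: first replace $\mu$ with $\mu_0$ in the integrand (at the cost of shrinking the bandwidth from $\sigma$ to $\sigma/\sqrt{2}$ and picking up a factor involving $\norm{\mu-\mu_0}_{\infty}$), and then convert the resulting convolution $\phi_{\sigma/\sqrt{2}}*f_0$ back into $f_0$ via the convolution preservation property that Assumption \ref{ass:2} guarantees (Lemma 6 of \cite{ghosal2007posterior}).

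For the first step, I apply the elementary inequality $(a+b)^2 \leq 2a^2 + 2b^2$ with $a = y-\mu_0(x)$ and $b = \mu_0(x)-\mu(x)$ to obtain, for every $x \in [0,1]$, the estimate $(y-\mu(x))^2 \leq 2(y-\mu_0(x))^2 + 2\norm{\mu-\mu_0}_{\infty}^2$. Exponentiating and dividing by $\sigma\sqrt{2\pi}$ gives the pointwise lower bound
\[
\phi_\sigma(y-\mu(x)) \geq \exp\!\left\{-\frac{\norm{\mu-\mu_0}_{\infty}^2}{\sigma^2}\right\} \cdot \frac{1}{\sigma\sqrt{2\pi}}\exp\!\left\{-\frac{(y-\mu_0(x))^2}{\sigma^2}\right\}.
\]
Since $(\sigma\sqrt{2\pi})^{-1}\exp\{-u^2/\sigma^2\} = (1/\sqrt{2})\phi_{\sigma/\sqrt{2}}(u)$, integrating this inequality over $x \in [0,1]$ and invoking the convolution identity (\ref{eq:conv}) with bandwidth $\sigma/\sqrt{2}$ yields
\[
f_{\mu,\sigma}(y) \geq \frac{1}{\sqrt{2}}\exp\!\left\{-\frac{\norm{\mu-\mu_0}_{\infty}^2}{\sigma^2}\right\}(\phi_{\sigma/\sqrt{2}}*f_0)(y).
\]
For the second step, Assumption \ref{ass:2} combined with Lemma 6 of \cite{ghosal2007posterior} supplies constants $\delta, C > 0$ such that $(\phi_s * f_0)(y) \geq C f_0(y)$ for all $y$ whenever $s < \delta$. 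Applying this with $s = \sigma/\sqrt{2}$ (valid for $\sigma < \delta\sqrt{2}$, the only range needed in the contraction argument) I conclude $f_{\mu,\sigma}(y) \geq (C/\sqrt{2})\exp\{-\norm{\mu-\mu_0}_{\infty}^2/\sigma^2\}f_0(y)$, and taking logarithms and the supremum over $y$ produces the claimed bound with $C_6 = \log(\sqrt{2}/C)$.

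The key subtlety is arranging the first step correctly. A naive attempt to control the ratio $\phi_\sigma(y-\mu(x))/\phi_\sigma(y-\mu_0(x))$ directly fails, because its exponent contains a cross term $(\mu(x)-\mu_0(x))(2y-\mu(x)-\mu_0(x))/(2\sigma^2)$ that is linear and unbounded in $y$. Slightly contracting the bandwidth to $\sigma/\sqrt{2}$ through the quadratic inequality absorbs that cross term and leaves behind exactly the factor $\exp\{\norm{\mu-\mu_0}_{\infty}^2/\sigma^2\}$ demanded by the conclusion. The second step is then the standard preservation-under-convolution property already packaged in \cite{ghosal2007posterior}, so the only work is to ensure that the extra factor of $\sqrt{2}$ in the bandwidth does not break the regime in which that lemma applies.
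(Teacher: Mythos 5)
Your proof is correct and takes essentially the same route as the paper: both lower-bound $f_{\mu,\sigma}$ by contracting the bandwidth to $\sigma/\sqrt{2}$ via the inequality $(y-\mu(x))^2 \le 2(y-\mu_0(x))^2 + 2\norm{\mu-\mu_0}_{\infty}^2$ to arrive at $\phi_{\sigma/\sqrt{2}}*f_0$ up to a constant and the factor $\exp\{-\norm{\mu-\mu_0}_\infty^2/\sigma^2\}$, and then invoke Lemma 6 of \cite{ghosal2007posterior} under Assumption \ref{ass:2}. Your write-up actually makes the elementary quadratic step explicit (the paper leaves it implicit and has what appears to be a typo, writing $\mu(x)$ where $\mu_0(x)$ is meant in the second display line) and you correctly flag the small-$\sigma$ restriction needed for the Ghosal--van der Vaart lemma.
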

\begin{proof}
Note that
\begin{eqnarray*}
f_{\mu, \sigma}(y) &=& \frac{1}{\sqrt{2\pi}\sigma}\int_{0}^{1}\exp\bigg\{-\frac{(y-\mu(x))^2}{2\sigma^2}\bigg\}dx\\
&\geq&  \frac{1}{\sqrt{2\pi}\sigma}\int_{0}^{1}\exp\bigg\{-\frac{(y-\mu(x))^2}{\sigma^2}\bigg\}dx \exp\bigg\{-\frac{\norm{\mu-\mu_0}_{\infty}^2}{\sigma^2}\bigg\}\\
&\geq& C_4\phi_{\sigma/\sqrt{2}} * f_0 (y) \exp\bigg\{-\frac{\norm{\mu-\mu_0}_{\infty}^2}{\sigma^2}\bigg\}\\
&\geq& C_5 f_0(y) \exp\bigg\{-\frac{\norm{\mu-\mu_0}_{\infty}^2}{\sigma^2}\bigg\},
\end{eqnarray*}
where the last inequality follows from Lemma 6 of \cite{ghosal2007posterior} by Assumption \ref{ass:2}.
Hence
$\log \norm{\frac{f_0}{f_{\mu, \sigma}}}_{\infty} \leq C_6 + \frac{\norm{\mu - \mu_0}_{\infty}^2}{\sigma^2}$ for some constant $C_6 > 0$.
\end{proof}

\begin{rem}
Note that if $f_0$ is compact then Assumption \ref{ass:2} is automatically satisfied.
\end{rem}

\noindent \textbf{Proof of Theorem \ref{thm:compact}:}
Following \cite{ghosal2000convergence}, we need to find sequences $\bar{\epsilon}_n,\tilde{\epsilon}_n \to 0$ with
$n\min\{\bar{\epsilon}_n^2,\tilde{\epsilon}_n^2\} \to \infty$  such that there exist constants $C_1, C_2, C_3, C_4> 0$ and sets $\mathcal{F}_n \subset \mathcal{F}$ so that,
\begin{align}
& \log N(\epsilon_n, \mathcal{F}_n, d) \leq C_1n\bar{\epsilon}_n^2 \label{eq1}\\
& \Pi(\mathcal{F}_n^c) \leq C_3\exp\{-n\tilde{\epsilon}_n^2(C_2+4)\}\label{eq2} \\
& \Pi\bigg( f_{\mu, \sigma}:  \int f_0 \log \frac{f_0}{f_{\mu, \sigma}} \leq \tilde{\epsilon}_n^2, \int f_0 \log \bigg(\frac{f_0}{f_{\mu, \sigma}}\bigg)^2 \leq \tilde{\epsilon}_n^2 \bigg) \geq C_4\exp\{-C_2n\tilde{\epsilon}_n^2\} \label{eq3}.
\end{align}
Then we can conclude that for $\epsilon_n = \max\{\bar{\epsilon}_n, \tilde{\epsilon}_n\}$ and sufficiently large $M > 0$, the posterior probability
\begin{eqnarray*}
\Pi_n(f_{\mu, \sigma}: d(f_{\mu, \sigma}, f_0) > M\epsilon_n | Y_1, \ldots, Y_n) \to 0 \, \, \text{a.s.}\, P_{f_0}.
\end{eqnarray*}

Let $W = (W_t: t \in \mathbb{R})$ be a Gaussian process with squared exponential covariance kernel. The spectral measure $m_{w}$ of $W$ is absolutely continuous with respect to the Lebesgue measure $\lambda$ on $\mathbb{R}$ with the Radon-Nikodym derivative given by
\begin{eqnarray*}
\frac{dm_{w}}{d\lambda}(x) = \frac{1}{2\pi^{1/2}}e^{-x^2/4}.
\end{eqnarray*}

Define a scaled Gaussian process $W^a=(W_{at}: t \in [0,1])$, viewed as a map in $C[0,1]$. Let $\mathbb{H}^a$ denote the RKHS of $W^a$, with the corresponding norm $\norm{\cdot}_{\mathbb{H}^a}$. The unit ball in the RKHS is denoted $\mathbb{H}^a_1$. We will consider the Gaussian process $\mu \sim W^A$ given $A$, with $A \sim \mbox{Gamma}(p,q)$.

We will first verify (\ref{eq3}) along the lines of \cite{ghosal2007posterior}.
Note that
\begin{eqnarray}\label{eq:H2}
h^{2}(f_0, f_{\mu, \sigma}) \precsim h^{2}(f_0, f_{\mu_0, \sigma}) + h^{2}(f_{\mu_0, \sigma}, f_{\mu, \sigma}).
\end{eqnarray}
Since $f_{\mu_0, \sigma} = \phi_{\sigma}*f_0$, using Lemma 4 of \cite{ghosal2007posterior}, one obtains under Assumptions \ref{ass:1} and \ref{ass:2}, \begin{eqnarray}\label{eq:H}
 h^{2}(f_0, f_{\mu_0, \sigma}) \precsim O(\sigma^4).
\end{eqnarray}

From Lemma \ref{lem:hellinger} and the following remark, we obtain
\begin{eqnarray}
h^{2}(f_{\mu_0, \sigma}, f_{\mu, \sigma}) \precsim \frac{\norm{\mu- \mu_0}_{\infty}^2}{\sigma^2}.
\end{eqnarray}
From Lemma 8 of \cite{ghosal2007posterior}, one has
\begin{eqnarray}\label{eq:KtoH}
\int f_0 \log \bigg(\frac{f_0}{f_{\mu, \sigma}}\bigg)^{i} \leq h^2(f_0, f_{\mu,\sigma})\bigg(1 + \log \norm{\frac{f_0}{f_{\mu, \sigma}}}_{\infty}\bigg)^{i}
\end{eqnarray}
for $i=1,2$.

From (\ref{eq:H2})-(\ref{eq:KtoH}), for any $b \geq 1$ and $\tilde{\epsilon}_n^2 = \sigma_n^4$,
\begin{eqnarray*}
\big\{ \sigma \in [\sigma_n, \sigma_n + \sigma_n^b], \norm{\mu - \mu_0}_{\infty} \precsim \sigma_n^3 \big\} \subset \\ \bigg\{  \int f_0 \log \frac{f_0}{f_{\mu, \sigma}} \precsim \sigma_n^4, \int f_0 \log \bigg(\frac{f_0}{f_{\mu, \sigma}}\bigg)^2 \precsim \sigma_n^4\bigg\}.
\end{eqnarray*}
Then (\ref{eq3}) will be satisfied with $\tilde{\epsilon}_n = n^{-\frac{2}{5}}$
if
\begin{eqnarray*}
\mbox{P}\{\sigma \in [\sigma_n, 2\sigma_n ], \norm{\mu - \mu_0}_{\infty} \precsim \sigma_n^3\} \geq \exp\{-C_4n^{\frac{1}{5}}\}
\end{eqnarray*}
for some constant $C_4 > 0$.

Since $\mu_0 \in C^{3}[0,1]$, from Section 5.1 of \cite{van2009adaptive},
\begin{eqnarray*}
\mbox{P}(\norm{\mu - \mu_0}_{\infty} \leq 2\delta_n) \geq C_5\exp\{-C_6(1/\delta_n)^{1/3}\}(C_7/\delta_n)^{p/3},
\end{eqnarray*}
for $\delta_n \to 0$ and constants $C_5, C_6, C_7 > 0$.  Letting $\delta_n = \sigma_n^3$, we obtain
\begin{eqnarray*}
\mbox{P}(\norm{\mu - \mu_0}_{\infty} \leq 2\delta_n) \geq \exp\{-C_8(1/\sigma_n)\},
\end{eqnarray*}
for some constant $C_8 > 0$.
Since $\sigma \sim IG(a_{\sigma}, b_{\sigma})$,
we have
\begin{eqnarray*}
P(\sigma \in [\sigma_n, 2\sigma_n ]) &=& \frac{b_{\sigma}^{a_{\sigma}}}{\Gamma(a_{\sigma})}\int_{\sigma_n}^{2\sigma_n}x^{-(a_{\sigma}+1)} e^{-b_{\sigma}/x}dx \\ &\geq& \frac{b_{\sigma}^{a_{\sigma}}}{\Gamma(a_{\sigma})}\int_{\sigma_n}^{2\sigma_n} e^{-2b_{\sigma}/x}dx \\
&\geq& \frac{b_{\sigma}^{a_{\sigma}}}{\Gamma(a_{\sigma})}\sigma_n\exp\{-b_{\sigma}/\sigma_n \}\\
&\geq& \exp\{-C_9/\sigma_n \},
\end{eqnarray*}
for some constant $C_9> 0$. Hence
\begin{eqnarray*}
\mbox{P}\{\sigma \in [\sigma_n, 2\sigma_n ], \norm{\mu - \mu_0}_{\infty} \precsim \sigma_n^3\} \geq \exp\{-C_4n^{\frac{1}{5}}\},
\end{eqnarray*}
with $\sigma_n = n^{-\frac{1}{5}}$, $\tilde{\epsilon}_n = n^{-\frac{2}{5}}$ and for some $C_4 > 0$.

Next we construct a sequence of subsets $\mathcal{F}_n$ such that \ref{eq1} and \ref{eq2} are satisfied with
$\bar{\epsilon}_n = n^{-\frac{2}{5}}\log ^{t_2}n$ and $\tilde{\epsilon}_n$ for some global constant $t_2 > 0$.

Letting $\mathbb{B}_1$ denote the unit ball of
$C[0,1]$ and given positive sequences $M_n, r_n, \xi_n$, define
\begin{eqnarray*}
B_n = \bigg(M_n\sqrt{\frac{r_n}{\xi_n}}\mathbb{H}_1^r + \bar{\delta}_n\mathbb{B}_1\bigg) \cup
\bigg(\cup_{a < \xi_n}(M_n \mathbb{H}^a_1) + \bar{\delta}_n\mathbb{B}_1 \bigg)
\end{eqnarray*}
as in \cite{van2009adaptive}, with $\bar{\delta}_n =\bar{\epsilon}_nl_n/K_1, K_1 = 2(2/\pi)^{1/2}$ and let
\begin{eqnarray*}
\mathcal{F}_n = \{f_{\mu, \sigma}: \mu \in B_n, l_n < \sigma < h_n \}.
\end{eqnarray*}
First we need to calculate $N(\bar{\epsilon}_n, \mathcal{F}_n, \norm{\cdot}_1)$. Observe that for $\sigma_2 >\sigma_1 > \frac{\sigma_2}{2}$,
\begin{eqnarray*}
\norm{f_{\mu_1, \sigma_1} - f_{\mu_2, \sigma_2}}_1 \leq \bigg(\frac{2}{\pi}\bigg)^{1/2}\frac{\norm{\mu_1 - \mu_2}_{\infty}}{\sigma_1} + \frac{3(\sigma_2 - \sigma_1)}{\sigma_1}.
\end{eqnarray*}

Taking $\kappa_n =\min\{\frac{\bar{\epsilon}_n}{6}, 1\}$ and $\sigma_m^n = l_n (1+ \kappa_n)^m, m \geq 0$, we obtain a partition of $[l_n, h_n]$ as $l_n=\sigma_0^n < \sigma_1^n < \cdots < \sigma_{m_{n}-1}^n < h_n \leq \sigma_{m_n}^n$ with
\begin{eqnarray}\label{eq:entropy1}
m_n= \bigg(\log \frac{h_n}{l_n}\bigg) \frac{1}{\log( 1+\kappa_n)} +1.
\end{eqnarray}
One can show that $\frac{3(\sigma_m^n -\sigma_{m-1}^n)}{\sigma_{m-1}^n} = 3\kappa_n \leq \bar{\epsilon}_n/2$. Let
$\{\tilde{\mu}_k^n, k=1, \ldots, N(\bar{\delta}_n, B_n, \norm{\cdot}_{\infty})\}$  be
a $\bar{\delta}_n$-net of $B_n$. Now consider the set
\begin{eqnarray} \label{eq:net}
\{(\tilde{\mu}_k^n, \sigma_{m}^n): k=1, \ldots, N(\bar{\delta}_n, B_n, \norm{\cdot}_{\infty}), 0\leq m\leq m_n \}.
\end{eqnarray}
Then for any $f= f_{\mu, \sigma} \in \mathcal{F}_n$, we can find $(\tilde{\mu}_k^n, \sigma_{m}^n)$ such that
$\norm{\mu - \tilde{\mu}_k^n}_{\infty} < \bar{\delta}_n$. In addition, if one has $\sigma \in (\sigma_{m-1}^{n}, \sigma_m^{n}]$,
then
\begin{eqnarray*}
\norm{f_{\mu, \sigma} - f_{\mu_k^n, \sigma^n_m}}_1 \leq \bar{\epsilon}_n.
\end{eqnarray*}
Hence the set in (\ref{eq:net}) is an $\bar{\epsilon}_n$-net of $\mathcal{F}_n$ and its covering number is given by
\begin{eqnarray*}
m_nN(\bar{\delta}_n, B_n, \norm{\cdot}_{\infty}).
\end{eqnarray*}
From the proof of Theorem 3.1 in \cite{van2009adaptive}, for $\xi_n = \bar{\delta}_n/ (2\tau M_n)$
and for any $M_n, r_n$ with
\begin{eqnarray}\label{eq:condentropy}
M_n^{3/2}\sqrt{2\tau r_n} > 2\bar{\delta}_n^{3/2}, \, r_n > a_0, \,  M_n \sqrt{\norm{m_{w}}} > \bar{\delta}_n,
\end{eqnarray}
we obtain
\begin{eqnarray}\label{eq:entropy2}
\log N(3\bar{\delta}_n, B_n, \norm{\cdot}_{\infty}) \leq K_2 r_n \bigg( \log \bigg(\frac{M_n^{3/2}\sqrt{2\tau r_n}}{\bar{\delta}_n^{3/2}}\bigg)\bigg)^{2} + 2 \log \frac{2M_n\norm{m_{w}}}{\bar{\delta}_n}
\end{eqnarray}
where $\tau^2 = \int_{\mathbb{R}}x^2 dm_{w}(x)$ and $\norm{m_{w}}$ is the total variation norm of the spectral measure $m_{w}$.

Again from the proof of Theorem 3.1 in \cite{van2009adaptive}, for sufficiently small $\bar{\delta}_n$ and $r_n > 1$ and for $M_n^2 > 16K_3r_n (\log (r_n / \bar{\delta}_n))^2$, we have
\begin{eqnarray}\label{eq:compact1}
P(W^A \notin B_n) \leq K_4r_n^{p-1}\exp\{-K_5r_n\} + \exp\{-M_n^2/8\}
\end{eqnarray}
for constants $K_3, K_4, K_5 > 0$.

Next we calculate $P(\sigma \notin [l_n, h_n])$. Observe that
\begin{eqnarray}
P(\sigma \notin [l_n, h_n ]) &=& P(\sigma^{-1} < h_n^{-1}) + P(\sigma^{-1} > l_n^{-1})\nonumber\\
 &\leq& \sum_{k=\alpha_{\sigma}}^{\infty}\frac{e^{-b_{\sigma}h_n^{-1}}(b_{\sigma}h_n^{-1})^k}{k!} + \frac{b_{\sigma}^{a_{\sigma}}}{\Gamma(a_{\sigma})}\int_{l_n^{-1}}^{\infty} e^{-b_{\sigma}x/2}dx \nonumber \\
&\leq& e^{-a_{\sigma}\log(h_n)} + \frac{b_{\sigma}^{a_{\sigma}}}{\Gamma(a_{\sigma})}e^{-b_{\sigma}l_n^{-1}/2}.\label{eq:compact2}
\end{eqnarray}

Thus with $h_n = O(\exp\{n^{1/5}\}), l_n = O(n^{-1/5}), r_n =O(n^{1/5}), M_n = O(n^{1/10}\log n)$,
(\ref{eq:compact1}) and (\ref{eq:compact2}) implies
\begin{eqnarray*}
\Pi(\mathcal{F}_n^c)= \exp\{-K_6n^{1/5}\}
\end{eqnarray*}
for some constant $K_6 > 0$ guaranteeing that (\ref{eq2}) is satisfied with $\tilde{\epsilon_n} = n^{-1/5}$.

Also with $\bar{\epsilon}_n = n^{-2/5}(\log n)$, (\ref{eq:condentropy}) is satisfied and it follows from
(\ref{eq:entropy1}) and (\ref{eq:entropy2}) that
\begin{eqnarray*}
\log N(\bar{\epsilon}_n, \mathcal{F}_n, \norm{\cdot}_1) \leq K_7 n^{1/5}(\log n)^2
\end{eqnarray*}
for some constant $K_7 > 0$.

Hence $\max\{\bar{\epsilon}_n, \tilde{\epsilon}_n\} = n^{-2/5}\log n$.

\section{The non-compact case}\label{sec:noncompact}
The analysis of the non-compact case poses greater technical difficulties compared to the compact case, especially in verifying condition (\ref{eq3}).
Recall that in the compact case, $K(f_0, f_{\mu, \sigma}) \precsim O(\sigma^4) + \norm{\mu - \mu_0}_{\infty}^2/\sigma^2$. However, if $\mbox{supp}(f_0) = \mathbb{R}$, then the corresponding quantile function $\mu_0$ has $\norm{\mu_0}_{\infty} = \infty$. This prohibits us from bounding $\int f_0 \log(f_0/f_{\mu, \sigma})^i, i= 1, 2$ using Lemma \ref{lem:logsup}, since no prior for $\mu$ supported on $C[0, 1]$ can concentrate around arbitrarily small neighborhoods of the true quantile function $\mu_0$ in sup-norm. Since the tail behavior of $f_0$ has a one-to-one correspondence with the behavior of $\mu_0$ near the boundary, we make additional assumptions on the tails of $f_0$ similar to (C3) in \citet{kruijer2010adaptive}.
\begin{ass}\label{ass:3}
$f_0$ has exponential tails, i.e., there exist positive constants $T, M, \tau_1, \tau_2$ such that
\begin{align}\label{eq:ass3}
f_0(x) \leq M \exp(-\tau_1 |x|^{\tau_2}), \quad |x| \geq T.
\end{align}
\end{ass}
\begin{rem}
 Remark \ref{rem:ass:1} suggests that under Assumption \ref{ass:3}, $w_0$ behaves like a polynomial near the tails and hence Assumption \ref{ass:1} is automatically satisfied as long as $w_0$ or equivalently $f_0$ is twice continuously differentiable.
\end{rem}

To derive concentration inequalities for the prior on $\mu$, it is convenient to work with a series prior for $\mu$ as follows:
\begin{ass}\label{ass:1pnc}
For an orthonormal basis $\{\phi_{j}\}_{j=0}^{\infty}$ of $L_{2}[0, 1]$, a sequence of scales $\lambda_j \downarrow 0$, a fixed domain-rescaling integer $a$, a global scaling factor $b > 0$ and a truncation level $J$, consider a prior distribution for $\mu$ given by an orthonormal series expansion
\begin{eqnarray*}
W^J(t) =\sum_{j=0}^{J}\lambda_{j}Z_jb\phi_j(at), \, t \in [0, 1].
\end{eqnarray*}
In the sequel we will chose sequences for $J, a$ and $b$ given by $J_n=O(n^{1/5}), b_n = n^{-1/10}(\log n)^{1/2}$ and $a_n = n^{\alpha}$ for some $\alpha > 1$ to attain the optimal rate of convergence.
\end{ass}

\begin{rem}
Let $W_{2, q}[0,1]$ denote the Sobolev space of $L_{2}[0, 1]$ functions $f$ whose weak partial derivative of order $q$, $D^q f \in L_2[0, 1]$. Also, for $C > 0$, denote
\begin{eqnarray}
W_{2, q}^C[0,1] = \{f \in L_{2}[0, 1]: \norm{f}_{2, q}^2 =  \norm{D^{q}f}_{2}^{2} \leq C \}
\end{eqnarray}
to be the set of functions in $W_{2, q}[0,1]$ norm-bounded by $C$.
In the sequel, we shall assume that $\phi_{j}$'s are given by a cosine basis
\begin{eqnarray}
\phi_{0}(t) &=& \frac{1}{\sqrt{2}} \\
\phi_{j}(t) &=& \cos( 2\pi j t), \, j \geq1 \\
\end{eqnarray}
so that for $f(t)= \sum_{j=0}^{\infty}\theta_jb \phi_j(at)$, one has $\norm{f}_{2, q}^2 = b^2\sum_{j=1}^{\infty}\theta_j^2(2\pi aj)^q$.  The techniques used subsequently can be easily extended to other orthonormal bases.
\end{rem}

We are now in a position to state the main theorem of posterior convergence rates for the non-compact case.
 \begin{thm}\label{thm:non-compact}
If $f_0$ is twice continuously differentiable, satisfies Assumptions \ref{ass:2} and \ref{ass:3}, and the priors $\Pi_\mu$ and $\Pi_{\sigma}$ are as in
Assumptions \ref{ass:1pnc} and \ref{ass:2p} respectively, the best obtainable posterior rate of convergence relative to $h$ is
\begin{eqnarray}\label{eq:optrate}
\epsilon_{n} = n^{-\frac{2}{5}}(\log n)^{t_0},
\end{eqnarray}
for some global constant $t_0$.
\end{thm}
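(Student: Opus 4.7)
The proof follows the three-condition template of \cite{ghosal2000convergence} verified in the compact case, with the prior mass condition (\ref{eq3}) being the main new difficulty: when $\mbox{supp}(f_0) = \mathbb{R}$, the true quantile function $\mu_0$ is unbounded on $(0,1)$, so no prior supported on $C[0,1]$ can sit near it in sup-norm, and Lemma \ref{lem:logsup} cannot be applied to $\mu_0$ directly. Following the two technical contributions announced in the introduction, the plan is to replace $\mu_0$ by a tractable $C^{\infty}[0,1]$ surrogate $\tilde{\mu}_n$ whose induced density approximates $f_0$ well after Gaussian convolution, and then to show that the series prior in Assumption \ref{ass:1pnc} concentrates around $\tilde{\mu}_n$ with mass at least $\exp(-Cn^{1/5})$.

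\textbf{Step 1 (compact smooth approximation).} Use Assumption \ref{ass:3} to truncate $f_0$ to $[-a_n, a_n]$ with $a_n \asymp (\log n)^{1/\tau_2}$; the discarded tail has mass $\precsim \exp(-\tau_1 a_n^{\tau_2})$, which is negligible against any polynomial rate. Normalize to get a compactly supported density $\bar{f}_n$ and apply the maximum entropy moment-matching construction of \cite{mead1984maximum} to obtain a $C^{\infty}$ density $\tilde{f}_n$ on $[-a_n, a_n]$ whose first $K_n$ moments agree with those of $\bar{f}_n$. An appropriate choice of $K_n$ gives $h(\phi_{\sigma_n} * \bar{f}_n, \phi_{\sigma_n} * \tilde{f}_n) = O(\sigma_n^2)$; since $\tilde{f}_n$ is smooth and strictly positive on its compact support, the quantile $\tilde{\mu}_n := \tilde{F}_n^{-1} \in C^{\infty}[0,1]$.

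\textbf{Step 2 (KL--$V$ reduction).} Decompose
\begin{align*}
h(f_0, f_{\mu, \sigma}) \leq h(f_0, \phi_\sigma * f_0) + h(\phi_\sigma * f_0, \phi_\sigma * \tilde{f}_n) + h(f_{\tilde{\mu}_n, \sigma}, f_{\mu, \sigma}).
\end{align*}
The first term is $O(\sigma^2)$ by Lemma 4 of \cite{ghosal2007posterior} (Assumption \ref{ass:1} holds automatically via the remark after Assumption \ref{ass:3}); the second is $O(\sigma_n^2)$ by Step 1 together with the tail truncation bound; the third is $\precsim \norm{\mu - \tilde{\mu}_n}_\infty / \sigma$ by Lemma \ref{lem:hellinger}. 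For the KL and $V$ terms, apply Lemma \ref{lem:logsup} with $\tilde{\mu}_n$ in place of $\mu_0$ (valid because $\tilde{f}_n * \phi_\sigma \geq C f_0$ under Assumption \ref{ass:2} by an analogue of Lemma 6 of \cite{ghosal2007posterior}) and then invoke Lemma 8 of \cite{ghosal2007posterior} to convert the Hellinger bound plus the log-sup factor into a KL--$V$ bound. Taking $\sigma_n = n^{-1/5}$, the event $\{\sigma \in [\sigma_n, 2\sigma_n],\ \norm{\mu - \tilde{\mu}_n}_\infty \precsim \sigma_n^3\}$ is contained in the required KL--$V$ ball of radius of order $\sigma_n^4$ up to polylog factors.

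\textbf{Step 3 (small-ball and sieve).} Since $\tilde{\mu}_n \in C^{\infty}[0,1]$ with derivatives growing only polynomially in $\log n$, its coefficients in the rescaled cosine basis $\{\phi_j(a_n \cdot)\}$ decay fast enough that a Gaussian small-ball estimate on the truncated coefficient vector $(Z_0, \ldots, Z_{J_n})$ yields $\mbox{Pr}(\norm{W^{J_n} - \tilde{\mu}_n}_\infty \leq \sigma_n^3) \geq \exp(-C n^{1/5})$ under the tuning $J_n = n^{1/5},\ b_n = n^{-1/10}\sqrt{\log n},\ a_n = n^\alpha$ with $\alpha > 1$. Combined with the inverse-gamma mass on $\sigma$ computed exactly as in the compact case, this verifies (\ref{eq3}) with $\tilde{\epsilon}_n = n^{-2/5}$. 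For the sieve, take $\mathcal{F}_n = \{f_{\mu,\sigma} : \mu = \sum_{j=0}^{J_n} \lambda_j z_j b_n \phi_j(a_n \cdot),\ \abs{z_j} \leq M_n,\ l_n < \sigma < h_n\}$; Gaussian and inverse-gamma tails give $\Pi(\mathcal{F}_n^c) \leq \exp(-C n^{1/5})$, and a coefficient discretization combined with Lemma \ref{lem:hellinger} yields $\log N(\bar{\epsilon}_n, \mathcal{F}_n, \norm{\cdot}_1) \precsim n^{1/5} (\log n)^2$, as required for (\ref{eq1})--(\ref{eq2}) with $\bar{\epsilon}_n = n^{-2/5}(\log n)^{t_0}$.

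\textbf{Main obstacle.} The delicate point is Step 1: the moment-matching density $\tilde{f}_n$ must be close enough to $\bar{f}_n$ after $\phi_{\sigma_n}$-convolution (which forces $K_n$ to be large) yet flat enough near the boundary that $\tilde{\mu}_n$ remains in the sup-norm range of the truncated series prior (which constrains $K_n$ and $a_n$ from above). Coordinating the five parameters $\sigma_n, a_n, J_n, b_n, K_n$ so that every error contribution, together with the Gaussian small-ball lower bound, collapses to a single rate $n^{-2/5}(\log n)^{t_0}$ is the technical heart of the argument.
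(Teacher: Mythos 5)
Your high-level plan (truncate, smooth via MAXENT, then small-ball) matches the paper's strategy, but there are two substantive errors in Step~2, both of which stem from not separating the KL/V integrals into a compact piece and a tail piece, which is the key structural move in the paper's proof.

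First, the claim ``valid because $\tilde{f}_n * \phi_\sigma \geq C f_0$ under Assumption~\ref{ass:2}'' is false. Lemma~6 of \cite{ghosal2007posterior} gives $\phi_\sigma * f_0 \geq C f_0$ because the convolving density is $f_0$ itself; it does not transfer to $\phi_\sigma * \tilde{f}_n$. Since $\tilde{f}_n$ is supported on $[-a_n, a_n]$, for $|y| \gg a_n$ one has $\phi_\sigma * \tilde{f}_n(y) \precsim \exp\{-(|y|-a_n)^2/(2\sigma^2)\}$, while $f_0(y) \asymp \exp(-\tau_1 |y|^{\tau_2})$; when $\tau_2 < 2$ the ratio $f_0 / (\phi_\sigma * \tilde{f}_n) \to \infty$, and even for $\tau_2 \geq 2$ the constant depends disastrously on $\sigma$ and $a_n$. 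Consequently Lemma~\ref{lem:logsup} cannot be applied with $\tilde{\mu}_n$ in place of $\mu_0$, and the subsequent appeal to Lemma~8 of \cite{ghosal2007posterior} collapses. The paper avoids this by \emph{never} bounding $\log\|f_0 / f_{\mu,\sigma}\|_\infty$ globally: the decomposition~(\ref{eq:decomp2}) splits the integral over $E_\sigma$ (where all the convolved densities are comparable to $f_0$, and a lower bound $\succsim \sigma^{H_2}$ is available) and over $E_\sigma^c$ (where the crude bound $f_{\mu,\sigma}(y) \geq (2\pi\sigma^2)^{-1/2} e^{-(|y|+M)^2/(2\sigma^2)}$ on $\{\|\mu\|_\infty \leq M\}$ is traded off against the exponential tail of $f_0$, coordinated through the exponent $H_1$). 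Your outline has no analogue of the $E_\sigma^c$ term.

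Second, the assertion that $\tilde{\mu}_n$ has ``derivatives growing only polynomially in $\log n$'' is incorrect. The quantile function of the truncated density has $(F_{m_\sigma}^{-1})^{(q)}(1)$ involving $\{f_{m_\sigma}(a_\sigma)\}^{-(2q-1)}$, and $f_{m_\sigma}(a_\sigma) \asymp \sigma^{H_2}$, so Lemma~\ref{lem:upperbd} gives $\|m_\sigma\|_{2,q} \precsim \sigma^{-(2q-1)H_2/2}$; with $\sigma_n \asymp n^{-1/5}$ this grows polynomially in $n$. Absorbing this blow-up is precisely why the paper needs the rescaling $a_n = n^\alpha$ with $\alpha$ large and the specific exponent $q$ from Lemma~\ref{lem:priorconc}. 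Relatedly, because the lower bound on $f_{m_\sigma,\sigma}$ over $E_\sigma'$ is only $\sigma^{H_2}$ (not a universal constant), the small-ball radius required is $\|\mu - m_\sigma\|_\infty \precsim \sigma_n^{H_2 + 4}$, much smaller than the $\sigma_n^3$ you carry over from the compact case. Plugging the correct Sobolev-norm growth and the correct radius into the concentration function is what forces the elaborate parameter balancing in (\ref{eq:thmcond-2})--(\ref{eq:thmcond3}); without it, (\ref{eq3}) is not verified.
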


The construction of the sieves $\mathcal{F}_n$ is similar to the compact case and we shall omit the details of calculating the entropy and complement probability of $\mathcal{F}_n$ as they are essentially similar to the proof of Theorem \ref{thm:compact}. Verifying the KL condition in \ref{eq3} is the biggest hurdle in the non-compact case; we briefly outline the steps needed to bound the integrals within parenthesis in \ref{eq3}. The basic idea is to separate the integrals into an integral over a compact set and its complement. Inside the compact set, one can replace $f_0$ by a compact approximation $f_{0\sigma}$ and approximate the quantile function $\mu_{0\sigma}$ of $f_{0\sigma}$ by an infinitely smooth function on $[0,1]$ in an appropriate sense, which enables one to obtain the right concentration rate using a smooth prior on $C[0,1]$. The complement term can be handled by exploiting the exponential tails of $f_0$.

To elaborate, first define sets $E_{\sigma} = \{x ~:~ f_0(x) > \sigma^{H_1}\}$, $E_{\sigma}' = \{x ~:~ f_0(x) > \sigma^{H_2}\}$. Clearly, $E_{\sigma} \subset E_{\sigma}'$ if $H_2 > H_1$. Without loss of generality, one can assume $E_{\sigma}' = [d_{\sigma}, e_{\sigma}]$ by Assumption \ref{ass:2}.
Let $g_{0\sigma} = f_0 1_{E_{\sigma}'}$ denote the restriction of $f_0$ to the compact set $E_{\sigma}'$ and let $f_{0\sigma}$ be $g_{0\sigma}$ normalized to make it a density supported on $E_{\sigma}'$. Further, let $\mu_{0\sigma}:[0,1] \to E_{\sigma}'$ denote the quantile function of $f_{0\sigma}$ and denote $f_{\mu_{0\sigma},\sigma} = \phi_{\sigma}*f_{0\sigma}$.

We now bound $\mbox{V}(f_0, f_{\mu, \sigma}) = \int f_0 \log(f_0/f_{\mu, \sigma})^2$, the treatment of $\mbox{KL}(f_0, f_{\mu, \sigma})$ follows similarly. To start with, observe that
\begin{align}\label{eq:decomp1}
\int f_0 \log\bigg(\frac{f_0}{f_{\mu, \sigma}} \bigg)^2 \precsim \int f_0 \log\bigg(\frac{f_0}{f_{\mu_0, \sigma}} \bigg)^2 + \int f_0 \log\bigg(\frac{f_{\mu_0, \sigma}}{f_{\mu, \sigma}} \bigg)^2.
\end{align}
Using $f_{\mu_0, \sigma} = \phi_{\sigma}*f_0$, it follows from Lemma 8 of \citet{ghosal2007posterior} that,
\begin{align*}
\int f_0 \log\bigg(\frac{f_0}{f_{\mu_0, \sigma}} \bigg)^2 \leq h^2(f_0, \phi_{\sigma}*f_0)\bigg(1 + \log \norm{\frac{f_0}{\phi_{\sigma}*f_0}}_{\infty}\bigg)^2.
\end{align*}
Since $h^2(f_0, \phi_{\sigma}*f_0) \precsim O(\sigma^4)$ from \ref{eq:H} and $\phi_{\sigma}*f_0 \geq C f_0$ by Assumption \ref{ass:2}, one has
$\int f_0 \log(f_0/f_{\mu_0,\sigma})^2 \precsim O(\sigma^4)$. To handle the second term in \ref{eq:decomp1}, we break up the integral into integrals over $E_{\sigma}$ and $E_{\sigma}^c$ and further decompose the first term to obtain,
\begin{align}\label{eq:decomp2}
& \int f_0 \log\bigg(\frac{f_{\mu_0, \sigma}}{f_{\mu, \sigma}} \bigg)^2 \precsim \notag \\
&\bigg\{
\int_{E_{\sigma}}f_0 \log\bigg(\frac{f_{\mu_0, \sigma}}{f_{\mu_{0\sigma}, \sigma}} \bigg)^2 +
\int_{E_{\sigma}}f_0 \log\bigg(\frac{f_{\mu_{0\sigma}, \sigma}}{f_{m_{\sigma}, \sigma}} \bigg)^2 +
\int_{E_{\sigma}}f_0 \log\bigg(\frac{f_{m_{\sigma}, \sigma}}{f_{\mu, \sigma}} \bigg)^2
\bigg\} + \int_{E_{\sigma}^c} f_0 \log\bigg(\frac{f_{\mu_0, \sigma}}{f_{\mu, \sigma}} \bigg)^2.
\end{align}
As mentioned before, we work with a compactly supported approximation $f_{0\sigma}$ of $f_0$ on $E_{\sigma}$, with the support $E_{\sigma}'$ of $f_{0\sigma}$ containing $E_{\sigma}$ and exploit the exponentially decaying tails of $f_0$ on $E_{\sigma}^c$. $m_{\sigma}$ in  \ref{eq:decomp2} is an infinitely smooth function whose choice will be made explicit later. We now provide a detailed analysis of each term in \ref{eq:decomp2}.

We start with the last term on the right hand side of \ref{eq:decomp2}. The main idea is to work on a norm-bounded subset of the function space where the density function $f_{\mu, \sigma}$ can be bounded below and utilize the sub-Gaussian tails of $\norm{\mu}_{\infty}$ to bound the integral outside the above region.
Observe that for $\norm{\mu}_{\infty} \leq M$,
\begin{eqnarray*}
f_{\mu, \sigma}(y) \geq \frac{1}{\sqrt{2\pi\sigma^2}}e^{-\frac{1}{2\sigma^2}(\abs{y} + M)^2},
\end{eqnarray*}
which implies
\begin{eqnarray*}
& &\int_{E_{\sigma}^c} f_0(y) \log\bigg(\frac{f_{\mu_0, \sigma}(y)}{f_{\mu, \sigma}(y)} \bigg)^2dy \leq \int_{E_{\sigma}^c}f_0(y) \log \left\{\frac{C_4\sigma}{e^{-1/(2\sigma^2)(y - M)^2}} \right\}dy \\
&\leq& \log C_4\sigma\int_{E_{\sigma}^c}f_0(y)dy + \frac{1}{2\sigma^2}\int_{E_{\sigma}^c}(y- M)^2dy\\
&=&  \bigg(\log C_4 \sigma + \frac{M^2}{2\sigma^2}\bigg)\int_{E_{\sigma}^c} f_0(y)dy + \frac{1}{2\sigma^2}\int_{E_{\sigma}^c}y^2f_0(y) dy -\frac{M}{\sigma^2}\int_{E_{\sigma}^c} y f_0(y) dy.
\end{eqnarray*}

Now since $\int_{E_{\sigma}^c}y^jf_0(y)dy \leq \sigma^{H_1/2}\int_{E_{\sigma}^c}y^j\sqrt{f_0(y)}dy, j= 0,1,2$, we need to choose $H_1$ satisfying
$\sigma^{H_1/2} \approx \sigma^{6}/M$
to make $\int_{E_{\sigma}^c} f_0(y) \log\bigg(\frac{f_{\mu_0, \sigma}(y)}{f_{\mu, \sigma}(y)} \bigg)^2dy  \precsim O(\sigma^4)$.

To bound the integral over the set $\{ \norm{\mu}_{\infty} > M\}$, we provide an upper bound to $P(\norm{W^J}_{\infty} > M) $ in the following Lemma \ref{lem:comp}, with proof in the Appendix.
\begin{lem}\label{lem:comp}
With  $\sigma_{J}^2 = \sum_{j=1}^{J}\lambda_j^2$,
\begin{eqnarray*}
P(\norm{W^J}_{\infty} > M) \leq 2aM \exp\bigg[ -\frac{1}{2b^2\sigma_{J}^2}\bigg\{  M - C_6\frac{1}{M}\{(\log a)^{1/2} + (\log M)^{1/2}\} \bigg\}^2 \bigg]
\end{eqnarray*}
for some constant $C_6 > 0$.
\end{lem}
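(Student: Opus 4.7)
The plan is to bound $P(\norm{W^J}_{\infty} > M)$ via a discretization-plus-oscillation argument, standard for suprema of centered Gaussian processes. For each fixed $t \in [0,1]$, $W^J(t) = \sum_{j=0}^J \lambda_j Z_j b \phi_j(at)$ is a centered Gaussian with variance $b^2 \sum_{j=0}^J \lambda_j^2 \phi_j(at)^2 \leq b^2 \sigma_J^2$, using $|\phi_j| \leq 1$ pointwise (the $j=0$ contribution absorbed into the constant $\sigma_J^2$). First I would partition $[0,1]$ by $N = \lceil 2aM \rceil$ equispaced grid points $t_k = k/N$ for $k = 0, 1, \ldots, N$. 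The standard Gaussian tail estimate combined with a union bound yields, for any $M_1 \in (0, M]$,
\begin{equation*}
P\Bigl(\max_k |W^J(t_k)| > M_1\Bigr) \leq 2(N+1) \exp\Bigl(-\frac{M_1^2}{2 b^2 \sigma_J^2}\Bigr) \precsim 2 aM \exp\Bigl(-\frac{M_1^2}{2 b^2 \sigma_J^2}\Bigr).
\end{equation*}

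Next I would control the oscillation within each subinterval. The key observation is that $W^J$ is a trigonometric polynomial in $t$ of degree at most $Ja$ (since the highest-frequency basis element is $\cos(2\pi J a t)$). Using the Lipschitz estimate $|\phi_j(at) - \phi_j(as)| \leq 2\pi j a |t - s|$ together with a Gaussian tail bound applied to the derivative process $(W^J)'(t) = -2\pi a b \sum_j j \lambda_j Z_j \sin(2\pi j a t)$, either via Borell's inequality for the sup norm or via a chaining argument, one establishes
\begin{equation*}
\sup_{0 \leq k \leq N-1}\,\sup_{t \in [t_k, t_{k+1}]} |W^J(t) - W^J(t_k)| \leq \frac{C_6}{M}\bigl\{(\log a)^{1/2} + (\log M)^{1/2}\bigr\}
\end{equation*}
outside an event whose probability is absorbed into the main tail bound. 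The $M^{-1}$ factor arises from the grid spacing $1/N \asymp 1/(aM)$, and the logarithmic correction is the standard price of a union bound over the $\asymp aM$ grid cells.

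Finally, setting $M_1 = M - C_6 M^{-1}\{(\log a)^{1/2} + (\log M)^{1/2}\}$ and combining the two bounds via the triangle inequality
\begin{equation*}
\norm{W^J}_{\infty} \leq \max_k |W^J(t_k)| + \sup_k \sup_{t \in [t_k, t_{k+1}]}|W^J(t) - W^J(t_k)|
\end{equation*}
yields the claimed inequality. The main technical hurdle is establishing the oscillation estimate with the stated dependence on $a$ and $M$. A direct application of Bernstein's inequality for trigonometric polynomials would give $\norm{(W^J)'}_{\infty} \leq 2\pi J a \norm{W^J}_{\infty}$, which is circular for bounding $\norm{W^J}_{\infty}$; one must instead bound the derivative process unconditionally via its own Gaussian tail and a secondary grid argument, which is precisely what produces the $M^{-1}\sqrt{\log a + \log M}$ correction and tracks the scaling in the lemma.
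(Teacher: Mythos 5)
Your plan reconstructs, from scratch, essentially the same mechanism that the paper gets for free by citation. The paper's entire proof is: invoke Theorem~5.2 of \cite{adler1990introduction}, which asserts that for a centered Gaussian process $X$ on a compact $T$ with maximum variance $\sigma_T^2$,
\begin{equation*}
P(\norm{X}_{\infty} > M) \;\leq\; 2 N(1/M, T, \norm{\cdot}) \exp\Big[-\tfrac{1}{2\sigma_T^2}\{M - \nu(M)\}^2\Big],
\end{equation*}
where $\nu(M)$ is a Dudley-type entropy integral; then identify $W^J$ with a process on $T=[0,a]$ having maximum variance $b^2\sigma_J^2$, compute the covering number $N(1/M,[0,a],\norm{\cdot}) = aM$, and bound $\nu(M) \leq C_6 M^{-1}\{(\log a)^{1/2}+(\log M)^{1/2}\}$. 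Those are exactly the three quantities you identify (grid size $\asymp aM$, variance $b^2\sigma_J^2$, entropy correction $\asymp M^{-1}\sqrt{\log(aM)}$), so your plan lands on the correct answer and is in the same spirit.

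Where the proposal is weaker than the paper is the oscillation step, which you yourself flag as ``the main technical hurdle.'' The triangle-inequality decomposition $\norm{W^J}_\infty \leq \max_k|W^J(t_k)| + \sup_k\sup_t|W^J(t)-W^J(t_k)|$ leads to an \emph{additive} error term (the probability that the oscillation exceeds $M - M_1$) rather than the clean multiplicative form in the lemma where $\nu(M)$ is simply subtracted from $M$ inside one exponential; showing that the oscillation tail is dominated by the main term requires the same chaining/Borell machinery that Adler's theorem packages, and your proposed route via the derivative process does not obviously avoid this. Your own Bernstein-circularity observation is on point, but the proposed fix --- ``bound the derivative process unconditionally via its own Gaussian tail and a secondary grid argument'' --- is precisely another instance of the problem you started with (the derivative is itself a trigonometric Gaussian sup), so one is pushed into a genuine multi-scale chaining argument. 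That argument works and yields the stated correction, but it is exactly the content of Adler's Theorem~5.2, so the paper's citation short-circuits it. In short: same underlying route, correct identification of all the scaling constants, but the paper finishes in three lines by citing a packaged result while your sketch leaves the hardest step (controlling the oscillation without circularity, and showing it is absorbed rather than added) at the level of a pointer rather than a proof.
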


We now consider the three terms inside the parenthesis in \ref{eq:decomp2}. Let us start with $\int_{E_{\sigma}} f_0 \log(f_{\mu_0, \sigma}/f_{\mu_{0\sigma}, \sigma})^2$. For $y \in E_{\sigma}$, $f_{\mu_0, \sigma}(y)/f_{\mu_{0\sigma}, \sigma}(y) = \phi_{\sigma}*f_0(y)/\phi_{\sigma}*f_{0\sigma}(y)$. Recall, $f_{0\sigma}(y) = f_0 1_{E_{\sigma}'}(y)/\psi_{\sigma}$ with $\psi_{\sigma} = \int_{E_{\sigma}'} f_0(y)$. Note that
\begin{eqnarray*}
\psi_{\sigma} \geq 1- \int_{E_{\sigma}'}f_0(x)dx &\geq& 1- \sigma^{H_2/2}\int_{E_{\sigma}'}\sqrt{f_0(x)}dx \\
&\geq& 1- \sigma^{4}
\end{eqnarray*}
for $H_2 \geq 8$.
Now,
\begin{align*}
& \phi_{\sigma}*f_0(y) = \int \phi_{\sigma}(y-t) f_0(t) dt \\
& = \int_{E_{\sigma}'} \phi_{\sigma}(y-t) f_0(t) dt + \int_{(E_{\sigma}')^c} \phi_{\sigma}(y-t) f_0(t) dt.
\end{align*}
Hence,
\begin{align*}
\frac{\phi_{\sigma}*f_0(y)}{\phi_{\sigma}*f_{0\sigma}(y)} = \psi_{\sigma} \bigg\{ 1 +
\frac{\int_{(E_{\sigma}')^c} \phi_{\sigma}(y-t) f_0(t) dt}{\int_{E_{\sigma}'} \phi_{\sigma}(y-t) f_0(t) dt} \bigg\}.
\end{align*}
Now, for $t \in (E_{\sigma}')^c$ and $y \in E_{\sigma}$, $f_0(t) \leq \sigma^{H_2} \leq \sigma^{H_2 - H_1} f_0(y)$, implying
$\int_{(E_{\sigma}')^c} \phi_{\sigma}(y-t) f_0(t) dt \leq \sigma^{H_2 - H_1} f_0(y)$. Moreover,
\begin{align*}
& \int_{E_{\sigma}'} \phi_{\sigma}(y-t) f_0(t) dt = \phi_{\sigma}*f_0(y) - \int_{(E_{\sigma}')^c} \phi_{\sigma}(y-t) f_0(t) dt  \\
& \geq C f_0(y) - \sigma^{H_2 - H_1} f_0(y).
\end{align*}
Thus,
\begin{align}\label{eq:term1a}
\frac{\phi_{\sigma}*f_0(y)}{\phi_{\sigma}*f_{0\sigma}(y)} \precsim \psi_{\sigma}\bigg(1 + \frac{\sigma^{H_2 - H_1}}{C - \sigma^{H_2 - H_1}}\bigg)
\precsim 1
\end{align}
On the other hand,
\begin{align}\label{eq:term1b}
& \frac{\phi_{\sigma}*f_0(y)}{\phi_{\sigma}*f_{0\sigma}(y)} = \psi_{\sigma} \frac{\phi_{\sigma}*f_0(y)}{\int_{E_{\sigma}'} \phi_{\sigma}(y-t) f_0(t) dt} \notag \\
& \geq \psi_{\sigma} = 1 + O(\sigma^4).
\end{align}
Hence, from \ref{eq:term1a} \& \ref{eq:term1b}, one has $\phi_{\sigma}*f_0(y)/\phi_{\sigma}*f_{0\sigma}(y) = 1 + O(\sigma^4)$ for $y \in E_{\sigma}$, implying
$\int_{E_{\sigma}} f_0 \log(\phi_{\sigma}*f_0/\phi_{\sigma}*f_{0\sigma})^2 = O(\sigma^4)$.

We next turn our attention to $\int_{E_{\sigma}}f_0 \log(f_{\mu_{0\sigma}, \sigma}/f_{m_{\sigma}, \sigma})^2$. \citet{ghosal2007posterior} showed that a Gaussian convolution of a compactly supported distribution can be approximated with high accuracy by a finite mixture of normals with ``relatively few'' mixture components and \citet{kruijer2010adaptive} obtained a finer calibration of their result to handle the above integral. However, it becomes unwieldy to use their result in our setup since their approximating density is obtained as the convolution of a Gaussian kernel with a discrete mixing distribution with finitely many support points, with the corresponding quantile function being a step function on $[0,1]$ with finitely many jumps. Although one can place a prior on $\mu$ whose realizations are step functions, several issues arise with the posterior computation including choosing \& updating the number of steps. It would be appealing to use a smooth prior on $\mu$ and yet obtain a similar approximation result. We borrow techniques from the physics literature on maximum entropy moment matching or MAXENT \citep{mead1984maximum} to develop an approximation result with a smooth mixing measure as follows:
\begin{lem}\label{lem:maxent}
Let $f$ be a density compactly supported on $[-a_{\sigma}, a_{\sigma}]$ with $a_{\sigma} = a_0 |\log \sigma|^{1/\tau_2}$ with $\sigma$ small enough. Then, for any $A_0 > 0$, there exists an infinitely smooth density $f_{m_{\sigma}}$ on $[-a_{\sigma}, a_{\sigma}]$, such that $\norm{\phi_{\sigma}*f - \phi_{\sigma}*f_{m_{\sigma}}}_{\infty} \leq \sigma^{-1} \exp(-C |\log \sigma |^{1/\tau_2})$.
\end{lem}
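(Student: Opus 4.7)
The plan is to construct $f_{m_{\sigma}}$ by maximum-entropy moment matching and then exploit orthogonality to polynomials of degree $\le N$ implied by the matched moments.

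For an integer $N = N(\sigma)$ to be chosen later, I would first define $f_{m_{\sigma}}$ to be the maximum-entropy density on $[-a_{\sigma}, a_{\sigma}]$ matching the first $N+1$ moments of $f$, that is, $\int t^k f_{m_{\sigma}}(t)\, dt = \int t^k f(t)\, dt$ for $k = 0, 1, \ldots, N$. Since the prescribed moment sequence is realized by $f$ itself, classical MAXENT theory \citep{mead1984maximum} guarantees existence, and such a density admits the explicit form $f_{m_{\sigma}}(t) = \exp\bigl(\sum_{k=0}^N \lambda_k t^k\bigr)$ on $[-a_{\sigma}, a_{\sigma}]$ for suitable Lagrange multipliers $\lambda_k$. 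In particular, $f_{m_{\sigma}}$ is real-analytic on its support, hence infinitely smooth.

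For any fixed $y \in \mathbb{R}$, I would then Taylor expand $t \mapsto \phi_{\sigma}(y - t)$ about $t = 0$ to order $N$:
\[
\phi_{\sigma}(y - t) \;=\; \sum_{k=0}^{N} \frac{(-t)^k}{k!}\, \phi_{\sigma}^{(k)}(y) \;+\; R_{N+1}(y, t).
\]
Because $f$ and $f_{m_{\sigma}}$ share their first $N+1$ moments, the polynomial portion integrates to zero against $f - f_{m_{\sigma}}$, so that
\[
\bigl|(\phi_{\sigma} * f)(y) - (\phi_{\sigma} * f_{m_{\sigma}})(y)\bigr| \;\le\; 2\sup_{|t| \le a_{\sigma}} |R_{N+1}(y, t)|.
\]
For the remainder, I would use the identity $\phi_{\sigma}^{(k)}(x) = (-1)^k \sigma^{-k}\,\mathrm{He}_k(x/\sigma)\phi_{\sigma}(x)$ with $\mathrm{He}_k$ the probabilist's Hermite polynomial, together with Cram\'{e}r's inequality $\sup_z |\mathrm{He}_k(z)\phi(z)| \lesssim \sqrt{k!}$, to obtain the uniform derivative bound $\sup_x |\phi_{\sigma}^{(k)}(x)| \lesssim \sqrt{k!}\,\sigma^{-(k+1)}$. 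Combined with Lagrange's form of the remainder and Stirling,
\[
\sup_{|t| \le a_{\sigma}} |R_{N+1}(y, t)| \;\lesssim\; \frac{1}{\sigma}\left(\frac{e^{1/2}\,a_{\sigma}}{\sigma\sqrt{N+1}}\right)^{\!N+1}.
\]
Choosing $N = \lceil 4e\,a_{\sigma}^2/\sigma^2 \rceil$ forces the base of the exponent to at most $1/2$, yielding the bound $\sigma^{-1}\,2^{-(N+1)}$. Since $N \gg a_{\sigma} = a_0 |\log\sigma|^{1/\tau_2}$ for $\sigma$ small, this is dominated by $\sigma^{-1}\exp(-C|\log\sigma|^{1/\tau_2})$ for any prescribed constant $C$; the freedom in $C$ accounts for the ``for any $A_0 > 0$'' quantifier in the statement.

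The conceptual core is the cancellation of all Taylor-polynomial terms up to order $N$, combined with the super-exponential growth $\sqrt{N!} \gg (a_{\sigma}/\sigma)^N$ once $N$ exceeds $(a_{\sigma}/\sigma)^2$. The main technical obstacle is careful bookkeeping in the Hermite/derivative estimates; a secondary point is verifying that the MAXENT density exists and is $C^{\infty}$, which follows from standard results on exponential families supported on a compact interval.
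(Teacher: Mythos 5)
Your proof is correct and reaches the conclusion by a genuinely different technical route. Both you and the paper start from the same MAXENT construction of $f_{m_{\sigma}}$ and exploit moment matching to cancel the Taylor polynomial against $f - f_{m_{\sigma}}$, but the expansions and remainder controls differ. The paper Taylor-expands the exponential $e^{-u}$ in $u = (x-y)^2/(2\sigma^2)$, which gives a remainder of the form $(e|x-y|^2/(2k\sigma^2))^k$ that blows up when $|x-y|$ is large; to deal with this they enlarge the interval to $I_{\sigma} = [-(1+A)a_{\sigma}, (1+A)a_{\sigma}]$, control the Taylor remainder there, and on $I_{\sigma}^c$ fall back on the crude bound $|\phi_{\sigma}*(f - f_{m_{\sigma}})| \leq 2\phi_{\sigma}(t_{\sigma})$, then take $k$ of order $\sigma^{-\alpha}|\log\sigma|^{\alpha/\tau_2}$ with $\alpha > 2$ to make the remainder negligible against $\phi_{\sigma}(t_{\sigma})$. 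You instead Taylor-expand $t \mapsto \phi_{\sigma}(y-t)$ at $t = 0$ and control $\sup_x|\phi_{\sigma}^{(N+1)}(x)| \lesssim \sqrt{(N+1)!}\,\sigma^{-(N+2)}$ via Cram\'{e}r's inequality for Hermite functions, which yields a remainder bound that is uniform in $y$ and thereby sidesteps the interval-splitting entirely; with $N \asymp a_{\sigma}^2/\sigma^2$ the super-exponential decay kicks in immediately. Your route is cleaner --- one fewer free parameter ($t_{\sigma}$), no case split, and a more transparent reason why $N \gtrsim (a_{\sigma}/\sigma)^2$ moments suffice --- at the modest cost of importing the Hermite/Cram\'{e}r machinery, which the paper avoids by sticking to the elementary exponential-series remainder. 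Both proofs in fact deliver a bound of order $\sigma^{-1}\exp\{-c\,|\log\sigma|^{2/\tau_2}/\sigma^2\}$, comfortably stronger than the $\sigma^{-1}\exp\{-C|\log\sigma|^{1/\tau_2}\}$ stated in the lemma, so the "for any $A_0 > 0$" quantifier (and the unspecified $C$) is easily accommodated under either approach.
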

A proof of Lemma \ref{lem:maxent} can be found in the Appendix.

The tail behavior of $f_0$ implied by Assumptions \ref{ass:2} \& \ref{ass:3} imply $E_{\sigma}' \subset [-a_{\sigma}, a_{\sigma}]$ with $a_{\sigma} = a_0 |\log \sigma|^{1/\tau_2}$ with $a_0 = \big(\frac{H_2}{2\tau_1}\big)^{1/\tau_2}$.

Let $m_{\sigma}$ be the quantile function of the compactly supported density $f_{m_{\sigma}}$ in Lemma \ref{lem:maxent} and let $f_{m_{\sigma}, \sigma} = \phi_{\sigma}*f_{m_{\sigma}}$. Note that for $y \in E_{\sigma}'$,
\begin{eqnarray*}
f_{\mu_{0\sigma}, \sigma}(y) &=& \frac{1}{\psi_{\sigma}}\int_{E_{\sigma}'}\phi_{\sigma}(y-t) f_{0}(t)dt \nonumber \\ &\geq&  \frac{(C- \sigma^{H_2-H_1})}{1-\sigma^4}f_0(y) \geq \frac{C}{2} \sigma^{H_2},
\end{eqnarray*}
for sufficiently small $\sigma$. Using $\abs{\log x} \leq \max\{\log \abs{x-1}, \log \abs{1/x-1}\}$ for $x > 0$, one gets
\begin{align*}
& \int_{E_{\sigma}} f_0 \log\bigg(\frac{f_{\mu_{0\sigma}, \sigma}}{f_{m_{\sigma}, \sigma}} \bigg)^2
\leq \int_{E_{\sigma}} f_0 \bigg(\frac{\norm{f_{\mu_{0\sigma}, \sigma} - f_{m_{\sigma}, \sigma}}_{\infty}}{(C/3)\sigma^{H_2}} \bigg)^2,
\end{align*}
for $A_0$ large enough.  By choosing $A_0$ sufficiently large and using Lemma \ref{lem:maxent}, we obtain
\begin{eqnarray}\label{eq:main1}
\bigg(\frac{\norm{f_{\mu_{0\sigma}, \sigma} - f_{m_{\sigma}, \sigma}}_{\infty}}{(C/3)\sigma^{H_2}} \bigg)^2 \precsim O(\sigma^4).
\end{eqnarray}

Finally, we consider the third term $\int f_0 (\log f_{m_{\sigma}, \sigma} / f_{\mu, \sigma})^2$ inside the parenthesis in \ref{eq:decomp2}.
Proceeding as in the previous case, we first lower bound $f_{m_{\sigma}, \sigma}$ on $E_{\sigma}'$. In the previous case, we already obtained $f_{\mu_{0\sigma}, \sigma} \succsim \sigma^{H_2}$. From  \cite{borwein1991convergence}, $\norm{f_{m_{\sigma}} - f_{0\sigma}}_{\infty} = o(k^{-1})$ if we match $k$ moments.  From Lemma  \ref{lem:maxent}, we know that $k \approx O(\sigma^{-\alpha}\abs{\log \sigma}^{\alpha/\tau_2})$ and hence by choosing $\alpha$ large enough, we can make $f_{m_{\sigma}, \sigma} \succsim \sigma^{H_2}$ on $E_{\sigma}'$. Now, using the same bound for $\abs {\log x}$ as before, we need $\norm{f_{m_{\sigma,\sigma}} - f_{\mu,\sigma}}_{\infty}$ to be $O(\sigma^H)$ for any $H > 0$. From \cite{kruijer2010adaptive} it follows that $\sup_{y}\abs{\phi_{\sigma}(y-\mu_1) - \phi_{\sigma}(y-\mu_2)} \precsim \frac{\abs{\mu_1 -  \mu_2}}{\sigma^2}$ so that
\begin{eqnarray}\label{eq:main2}
\norm{f_{m_{\sigma,\sigma}} - f_{\mu,\sigma}}_{\infty}\precsim \frac{\norm{\mu - m_{\sigma}}_{\infty}}{\sigma^2}.
\end{eqnarray}

We shall now exploit the infinite differentiability of $m_{\sigma}$ to view it as an element of $W_{2,q}$ for some large $q$ and calculate the probability of the Gaussian process $W^J$ concentrating around $m_{\sigma}$.

The reproducing kernel Hilbert space (RKHS) of $W^J$ consists of the set of functions $w(t) = \sum_{j=0}^{J}w_jb\phi_j(at), t \in [0, 1]$ with RKHS norm
\begin{eqnarray*}
\norm{w}_{\mathbb{H}}^2 = \sum_{j=0}^{J}\frac{w_j^2}{\lambda_j^2} .
\end{eqnarray*}
Since $m_{\sigma}$ is infinitely differentiable, $m_{\sigma} \in W_{2, q}^{C}[0,1]$ for any $q \geq 1$ (with $C$ possibly depending on $q$). Hence there exists $\{m_{\sigma, j}\}_{j=0}^{\infty}$ such that
\begin{eqnarray*}
m_{\sigma}(t) = \sum_{j=0}^{\infty}m_{\sigma,j}b\phi_j(at), \, t \in [0,1].
\end{eqnarray*}
Consider the projection of $m_{\sigma}$ on the RKHS of $W^J$ as
\begin{eqnarray*}
m_{\sigma}^{J}(t) =\sum_{j=0}^{J}m_{\sigma,j}b\phi_j(at).
\end{eqnarray*}
In the sequel, we will choose a $q \geq 1$ and sequences of integers $J_n \uparrow \infty$, $a_n, b_n$ to achieve the optimal rate of convergence.
To that end, we calculate the prior concentration of $W^J$ around $m_{\sigma}^{J}$ for a fixed $J$ with $\lambda_j = j^{-q/4}$ for $j \geq 1$. Recall that the prior concentration function of the Gaussian process $W^{J}$ around  $m_{\sigma}^{J}$ is given by
\begin{eqnarray}
\phi_{m_{\sigma}^{J}}(\epsilon) = \inf_{h \in \mathbb{H}: \norm{h - m_{\sigma}^J}_{\infty} < \epsilon}\norm{h}^2_{\mathbb{H}} - \log \mbox{P}(\norm{W^J}_{\infty} \leq \epsilon).
\end{eqnarray}
\begin{lem} \label{lem:priorconc}
For $q > 16$,
\begin{eqnarray*}
\phi_{m_{\sigma}^{J}}(\epsilon) \precsim  \frac{1}{a^qb^2}\norm{m_{\sigma}^{J}}_{2,q}^2 +
\begin{cases}
J \big(1 + \log \frac{b}{\epsilon}\big), \, \epsilon J^{q/4} \precsim J^2 \\
\big(\frac{b}{\epsilon}\big)^{20/q},  \,  \epsilon J^{q/4} \geq J^2.
\end{cases}
\end{eqnarray*}
\end{lem}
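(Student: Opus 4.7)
Decompose $\phi_{m_{\sigma}^J}(\epsilon) = I_1(\epsilon) + I_2(\epsilon)$, where $I_1 = \inf\{\|h\|_{\mathbb{H}}^2 : h \in \mathbb{H},\; \|h - m_\sigma^J\|_\infty < \epsilon\}$ is the approximation term and $I_2 = -\log P(\|W^J\|_\infty \leq \epsilon)$ is the centered small-ball term. I treat them separately.

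For $I_1$, the crucial observation is that $m_\sigma^J$ itself lies in the RKHS $\mathbb{H}$ of $W^J$, because $\mathbb{H}$ is spanned by $\{b\phi_j(a\cdot) : 0\leq j\leq J\}$. Choosing $h = m_\sigma^J$ makes the approximation error vanish and gives $\|m_\sigma^J\|_\mathbb{H}^2 = \sum_{j=0}^J m_{\sigma,j}^2/\lambda_j^2$. Using $\lambda_j^{-2} = j^{q/2} \leq j^q$ for $j \geq 1$ and identifying $\|m_\sigma^J\|_{2,q}^2 = b^2\sum_{j\geq 1} m_{\sigma,j}^2(2\pi aj)^q$ as the Sobolev norm, I obtain $I_1 \precsim (a^q b^2)^{-1}\|m_\sigma^J\|_{2,q}^2$, which is the first term in the stated bound.

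For $I_2$, the starting point is the crude pointwise estimate $\|W^J\|_\infty \leq b\sum_{j=0}^J \lambda_j|Z_j|$, valid since $|\phi_j(at)|\leq 1$. In Case 1 ($\epsilon J^{q/4} \precsim J^2$, i.e.\ very small $\epsilon$) I bound further $\|W^J\|_\infty \leq b(J+1)\max_j \lambda_j|Z_j|$, so by independence $P(\|W^J\|_\infty \leq \epsilon) \geq \prod_{j=0}^J P(|Z_j|\leq \epsilon/((J+1)\lambda_j b))$. The case hypothesis forces every argument below $1$, so the elementary $P(|Z|\leq x)\geq cx$ applies; summing negative logarithms and absorbing $\log J$ into the dominant $\log(b/\epsilon)$ (which is precisely what the case condition ensures once $q>8$) yields $I_2 \precsim J(1+\log(b/\epsilon))$. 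In Case 2 ($\epsilon J^{q/4} \geq J^2$) I split $W^J = W_{\mathrm{low}} + W_{\mathrm{high}}$ at a cutoff $K \ll J$. Chebyshev's inequality combined with the tail estimate $\sum_{j>K}\lambda_j^2 \precsim K^{1-q/2}$ keeps $P(\|W_{\mathrm{high}}\|_\infty \leq \epsilon/2) \geq 1/2$ once $K \asymp (b/\epsilon)^{4/(q-2)}$; applying the Case 1 argument to the remaining $K$ low-frequency coordinates contributes $O(K\log(b/\epsilon))$. Choosing $K$ to balance the two pieces gives the polynomial bound $(b/\epsilon)^{20/q}$, with the specific exponent emerging from bookkeeping of the $K\log(b/\epsilon)$ overhead and the Markov slack; the hypothesis $q > 16$ keeps this exponent small enough to be usable in the downstream rate calculation.

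\textbf{Main obstacle.} The delicate step is Case 2: the cutoff $K$ must be tuned so that the Markov estimate on $W_{\mathrm{high}}$ and the finite-dimensional small-ball estimate on $W_{\mathrm{low}}$ are simultaneously controlled at comparable polynomial orders in $b/\epsilon$. The crossover $\epsilon J^{q/4} = J^2$ in the statement is exactly the value at which the Case 1 bound $J(1+\log(b/\epsilon))$ stops beating the polynomial bound $(b/\epsilon)^{20/q}$, so the case split is sharp; getting the constants aligned across the two regimes is the most technical part of the argument.
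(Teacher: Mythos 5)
Your treatment of $I_1$ is identical to the paper's: $m_\sigma^J$ lies in the RKHS of $W^J$, so the infimum is attained at $m_\sigma^J$ itself, and the coefficient comparison $\sum_j m_{\sigma,j}^2/\lambda_j^2 = \sum_j j^{q/2} m_{\sigma,j}^2 \leq \sum_j j^q m_{\sigma,j}^2 \precsim (a^q b^2)^{-1}\norm{m_\sigma^J}_{2,q}^2$ follows from $\norm{f}_{2,q}^2 = b^2\sum_j\theta_j^2(2\pi a j)^q$. So far so good, and this matches the paper word for word.

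Your route for $I_2$ is genuinely different from the paper's, but in its current form it has two gaps. In Case 1 you bound $\norm{W^J}_\infty \leq b(J+1)\max_j\lambda_j|Z_j|$, which is the special case $\alpha_j \equiv 1/(J+1)$ of the generic inequality $P(\sum_j|\lambda_jZ_jb|<\epsilon) \geq \prod_j P(|\lambda_jZ_j|<\alpha_j\epsilon/b)$ when $\sum\alpha_j\leq 1$. But with uniform $\alpha_j$ the worst (smallest) argument of the function $f(y) = -\log P(|Z|<y)$ is $\epsilon/((J+1)\lambda_1 b)$, which contributes $\log J$ to $-\log P(\norm{W^J}_\infty\leq\epsilon)$, so your bound is really $J(1+\log(b/\epsilon)+\log J)$. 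The lemma's bound has no $\log J$. Your claim that the case hypothesis $\epsilon J^{q/4}\precsim J^2$ absorbs $\log J$ into $\log(b/\epsilon)$ does not quite work: that hypothesis only gives $\log J \precsim \log(1/\epsilon)$, and $\log(1/\epsilon)$ need not be $\precsim\log(b/\epsilon)$ when $b<1$ (which is the relevant regime, $b_n=n^{-1/10}(\log n)^{1/2}$). The paper sidesteps this with the decaying weights $\alpha_j=(K+j^2)^{-1}$: these still sum to at most one, but the smallest argument of $f$ is now $\asymp\epsilon/b$, independent of $J$, so no $\log J$ appears. Also note your claim that the case hypothesis "forces every argument below 1" is false (at $j=J$ the argument is $\asymp \epsilon J^{q/4}/(Jb) \precsim J/b$, which is large in the regime of interest); this is harmless since $f(y)\precsim 1$ for $y\geq 1$, but the justification needs to change.

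In Case 2 your low/high frequency split with a Markov-type bound on $\norm{W_{\mathrm{high}}}_\infty$ is a reasonable alternative to the paper's change-of-variables argument, and would in fact yield a sharper exponent than $20/q$ if carried out carefully. However, the estimate as stated is not right: controlling $E\norm{W_{\mathrm{high}}}_\infty \leq b\sqrt{2/\pi}\sum_{j>K}\lambda_j \asymp b K^{1-q/4}$ forces $K\asymp(b/\epsilon)^{4/(q-4)}$, not $(b/\epsilon)^{4/(q-2)}$ as you wrote (the latter would follow from $\sum_{j>K}\lambda_j^2$, which controls an $L^2$ quantity rather than the sup-norm you need). More importantly, you never actually derive the $20/q$ exponent -- you assert it "emerges from bookkeeping." That bookkeeping, plus tracking the $\log$ factors that the Case 1 subroutine re-introduces on the $K$ low-frequency coordinates, is precisely the content of the lemma, and the paper does it explicitly via the substitution $y=\epsilon x^{q/4}/b$ in the integral $\int_1^J f(\epsilon x^{q/4}/(b(K+x^2)))\,dx$, splitting at $y=b/\epsilon$ and using $q>16$ to ensure convergence of the resulting integral. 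Without this calculation, the proposal doesn't establish the stated bound.
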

For a proof, refer to the Appendix.

Recall that we need the concentration bound for $m_{\sigma}$, while in the above lemma, we obtained the concentration bound for $m_{\sigma}^J$. We thus need error bounds on how well the truncation $m_{\sigma}^J$ approximates the function $m_{\sigma}$. Noting that $m_{j,\sigma}^2 \leq \frac{1}{b^2}\norm{m_{\sigma}}^{2}_{2,q}(aj)^{-q}$,
\begin{eqnarray}\label{eq:final}
\norm{m_{\sigma} - m_{\sigma}^J}_{\infty} &\leq& \norm{\sum_{j=J+1}^{\infty}m_{j,\sigma}\phi_j(t)}_{\infty} \nonumber \\
&\leq& b\sum_{j=J+1}^{\infty}\abs{m_{j, \sigma}} \nonumber \\
&\leq& \norm{m_{\sigma}}_{2,q}(aJ)^{-(q/2-1)}.
\end{eqnarray}

To bound the final term in (\ref{eq:final}), we provide an upper bound to $\norm{m_{\sigma}}_{2,q}^2$ in the following Lemma \ref{lem:upperbd}.

\begin{lem}\label{lem:upperbd}
$\norm{m_{\sigma}}_{2,q}^2 \leq \sigma^{-(2q-1)H_2}. $
\end{lem}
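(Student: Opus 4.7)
The plan is to express derivatives of the quantile function $m_\sigma$ in terms of derivatives of the density $f_{m_\sigma}$ via the inverse-function rule, to exploit the exponential form of the MAXENT density $f_{m_\sigma}(y) = \exp(-\sum_{i=0}^{k}\lambda_i y^i)$ to obtain a structural identity for its higher derivatives, and then to change variables in the $L^2$ integral to extract the sharp exponent.

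Starting from $F_{m_\sigma}(m_\sigma(u)) = u$, iterating the inverse-function derivative formula (via Fa\`a di Bruno) yields
\[
m_\sigma^{(q)}(u) = \sum_{\pi} c_\pi \frac{\prod_{b \in \pi} f_{m_\sigma}^{(b-1)}(m_\sigma(u))}{f_{m_\sigma}(m_\sigma(u))^{q+|\pi|}},
\]
the sum being over set partitions $\pi$ of $\{1,\ldots,q\}$ with blocks of size $\geq 2$. The exponential form of $f_{m_\sigma}$ implies $f_{m_\sigma}^{(j)}(y) = R_j(y)\, f_{m_\sigma}(y)$ for polynomials $R_j$ whose coefficients are controlled by the MAXENT Lagrange multipliers $\lambda_i$ and grow at most polynomially in $1/\sigma$. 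Substituting this identity into the Fa\`a di Bruno expression, one factor of $f_{m_\sigma}$ cancels in the numerator against the denominator for every block, collapsing the expression to $m_\sigma^{(q)}(u) = Q(m_\sigma(u))/f_{m_\sigma}(m_\sigma(u))^{q}$, where $Q$ is a polynomial whose coefficients are of polylog order in $1/\sigma$.

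Squaring and integrating, then substituting $y = m_\sigma(u)$ with $du = f_{m_\sigma}(y)\,dy$ gives
\[
\norm{m_\sigma}_{2,q}^2 \precsim \int_{E_\sigma'} \frac{Q(y)^2}{f_{m_\sigma}(y)^{2q-1}}\,dy.
\]
Using the lower bound $f_{m_\sigma} \succsim \sigma^{H_2}$ on $E_\sigma'$, established in the discussion immediately preceding the lemma statement, together with $|E_\sigma'| \precsim |\log\sigma|^{1/\tau_2}$ and the polynomial boundedness of $Q$ on the compact interval $[-a_\sigma, a_\sigma]$, the right-hand side is bounded by $\sigma^{-(2q-1)H_2}$ up to polylog factors in $1/\sigma$, which can be absorbed to yield the claim.

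The main obstacle is tracking the polynomial prefactors $R_j$ (and hence $Q$) tightly enough so that they contribute only polylog corrections rather than a polynomial-in-$\sigma^{-1}$ factor that would worsen the exponent. This requires quantitative control on the MAXENT Lagrange multipliers $\lambda_i$ from Lemma \ref{lem:maxent} and use of the compact-support estimate $|y| \leq a_\sigma \precsim |\log\sigma|^{1/\tau_2}$ when bounding the arguments of $R_j$. Without the cancellation from the identity $f_{m_\sigma}^{(j)} = R_j f_{m_\sigma}$, a naive substitution of the lower bound would produce the looser exponent $(4q-3)H_2$; it is precisely the exponential structure of MAXENT densities that enables the sharp $(2q-1)H_2$ rate.
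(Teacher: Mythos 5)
Your proof takes a genuinely different route from the paper's, and in fact it appears to be the cleaner argument. The paper evaluates $(F_{m_\sigma}^{-1})^{(q)}$ directly at the endpoints $x=0,1$, where it observes that the inverse-function formula places $f_{m_\sigma}^{2q-1}$ in the denominator; it then asserts $\int_0^1 \{(F_{m_\sigma}^{-1})^{(q)}(x)\}^2\,dx \precsim \max\{(F_{m_\sigma}^{-1})^{(q)}(0)^2, (F_{m_\sigma}^{-1})^{(q)}(1)^2\}$ and substitutes the endpoint lower bound $f_{m_\sigma}(\pm a_\sigma) \succsim \sigma^{H_2}$. Taken literally, this chain of inequalities yields $\sigma^{-2(2q-1)H_2}$ for the squared Sobolev norm, not $\sigma^{-(2q-1)H_2}$ as stated; the paper's sketch does not explain where the extra factor of two in the exponent disappears. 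Your argument avoids this issue entirely. The key extra observation you use — that the MAXENT density $f_{m_\sigma}(y) = \exp(-\sum_l b_l y^l)$ satisfies $f_{m_\sigma}^{(j)} = R_j\, f_{m_\sigma}$ with $R_j$ polynomial, so that in the Fa\`a di Bruno expansion every numerator factor cancels one power of $f_{m_\sigma}$, leaving $m_\sigma^{(q)}(u) = Q(m_\sigma(u))/f_{m_\sigma}(m_\sigma(u))^{q}$ — is not present in the paper at all. Combined with the change of variables $y = m_\sigma(u)$, $du = f_{m_\sigma}(y)\,dy$, which the paper also does not use, this produces the integrand $Q(y)^2/f_{m_\sigma}(y)^{2q-1}$ over the compact support, and hence the exponent $(2q-1)H_2$ matching the stated claim. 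In short: the paper bounds the sup of the $q$th derivative and then squares, which over-counts; you bound the $L^2$ norm directly via the Jacobian, which buys back one factor.

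Both arguments share the same unresolved technical gap, and you correctly flag it. The paper disposes of the numerator with the one-line assertion "the numerator terms of the above expression are bounded"; you identify the analogous need to control the polynomial $Q$, whose degree grows like $q(k-1)$ with $k \approx \sigma^{-\alpha}|\log\sigma|^{\alpha/\tau_2}$ moments and whose coefficients are products of the MAXENT Lagrange multipliers $b_l$. Neither the paper nor your outline actually establishes that this prefactor contributes only a polylog (or even a controlled polynomial) correction; you are right that without quantitative bounds on the $b_l$ this step is not closed. Given that the paper itself says "Exact derivation of the bound is quite tedious and we shall only sketch the main steps," your outline is at the same level of rigor, reaches the stated exponent whereas a literal reading of the paper's sketch does not, and isolates the genuine remaining difficulty more clearly.
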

\begin{proof}
Exact derivation of the bound is quite tedious and we shall only sketch the main steps of the proof.
Recall that  $m_{\sigma}(x) = F_{m_{\sigma}}^{-1}(x), x \in [0, 1]$ where $F_{m_{\sigma}} : [-a_{\sigma}, a_{\sigma}] \to [0, 1]$ given by $F_{m_{\sigma}}(x) = \int_{-a_{\sigma}}^{x}f_{m_{\sigma}}(t)dt$.
Then
\begin{eqnarray}
\norm{m_{\sigma}}_{2,q}^2 =  \int_{0}^{1}\{(F_{m_{\sigma}}^{-1})^{(q)}(x)\}^2dx
\end{eqnarray}
Observe that
\begin{eqnarray*}
(F_{m_{\sigma}}^{-1})'(1) & = &\frac{1}{f_{m_{\sigma}}(a_{\sigma})},  (F_{m_{\sigma}}^{-1})''(1)  =  -\frac{f_{m_{\sigma}}'(a_{\sigma})}{f_{m_{\sigma}}(a_{\sigma})^3}  \\
(F_{m_{\sigma}}^{-1})'''(1) & = &-3\frac{f_{m_{\sigma}}'(a_{\sigma})^2}{f_{m_{\sigma}}(a_{\sigma})^5} - \frac{f_{m_{\sigma}}''(a_{\sigma})}{f_{m_{\sigma}}(a_{\sigma})^4}.
\end{eqnarray*}
Proceeding like this, one has $\{f_{m_{\sigma}}(a_{\sigma})\}^{2q-1}$ in the denominator for $(F_{m_{\sigma}}^{-1})^{(q)}(1)$ and $\{f_{m_{\sigma}}(-a_{\sigma})\}^{2q-1}$  for $(F_{m_{\sigma}}^{-1})^{(q)}(0)$. The numerator terms of the above expression are bounded.  From \cite{borwein1991convergence}, we know that $\norm{f_{m_{\sigma}} - f_{0\sigma}}_{\infty} = o(k^{-1})$ if we match $k$ moments.  From Lemma  \ref{lem:maxent}, $k \approx O(\sigma^{-\alpha}\abs{\log \sigma}^{\alpha/\tau_2})$ and hence by choosing $\alpha$ large enough we can make $f_{m_{\sigma}}(a_{\sigma}) \geq f_{0\sigma}(a_{\sigma}) - \sigma^{H_2+1}$ and  $f_{m_{\sigma}}(-a_{\sigma}) \geq f_{0\sigma}(-a_{\sigma}) - \sigma^{H_2+1}$ which implies,
\begin{eqnarray}
(F_{m_{\sigma}}^{-1})^{(q)}(1) \precsim \sigma^{-(2q-1)H_2}, \, (F_{m_{\sigma}}^{-1})^{(q)}(0) \precsim \sigma^{-(2q-1)H_2}.
\end{eqnarray}
Noting that $\int_{0}^{1}\{(F_{m_{\sigma}}^{-1})^{(q)}(x)\}^2dx \precsim \max \{\{(F_{m_{\sigma}}^{-1})^{(q)}(0)\}^2, \{(F_{m_{\sigma}}^{-1})^{(q)}(1)\}^2 \}$,
the conclusion follows immediately.
\end{proof}

We are now in a position to complete the proof of Theorem \ref{thm:non-compact}.
\subsection{Proof of Theorem \ref{thm:non-compact}}
\begin{proof}
Since we can bound the numerator of the rhs of \ref{eq:main2} as
\begin{eqnarray}
\norm{W^J - m_{\sigma}}_{\infty} \leq  \norm{W^J - m_{\sigma}^J}_{\infty} + \norm{m_{\sigma} - m_{\sigma}^J}_{\infty},
\end{eqnarray}
we need $\norm{m_{\sigma} - m_{\sigma}^J}_{\infty}=\norm{m_{\sigma}}_{2,q}(aJ)^{-(q/2-1)}$
 to be $O(\sigma^{H_2+4})$ so that the fourth term of \ref{eq:decomp2} is $O(\sigma^4)$.
Next, we calculate the prior probability $P(\norm{W^J - m_{\sigma}^J}_{\infty} \leq \sigma^{H_2+4})$. Using Lemma \ref{lem:priorconc}, we can see that if
\begin{align}
M_n\sigma_n^{H_1/2}  &= O(\sigma_n^6), \label{eq:thmcond-2}\\
P(\norm{W^J}_{\infty} > M_n ) &= O(e^{-1/\sigma_n}), \label{eq:thmcond-1}\\
\norm{m_{\sigma}}_{2,q}(a_nJ_n)^{-(q/2-1)} &= O(\sigma_n^{H_2+4}), \label{eq:thmcond0}\\
(b_n\sigma_n^{-(H_2+4)})^{20/q} &= O(\sigma_n^{-1}), \label{eq:thmcond1} \\
\frac{1}{a_n^qb_n^2}\norm{m_{\sigma}^{J}}_{2,q}^2 &= O(\sigma_n^{-1}), \label{eq:thmcond2} \\
J_n &=O(\sigma_n^{-1}), \label{eq:thmcond3}
\end{align}
and
$\sigma \in [\sigma_n, 2\sigma_n]$, then
\begin{align*}
&\Pi\bigg( f_{\mu, \sigma}:  \int f_0 \log \frac{f_0}{f_{\mu, \sigma}} \leq \sigma_n^4, \int f_0 \log \bigg(\frac{f_0}{f_{\mu, \sigma}}\bigg)^2 \leq \sigma_n^4 \bigg) \\ &\geq  P(\sigma \in [\sigma_n, 2\sigma_n], \norm{W^J - m_{\sigma}^J}_{\infty} \leq \sigma_n^{H_2+4}, \norm{W^J}_{\infty}\leq M_n) \\ \geq
O(\sigma_n^{-1}\abs{\log \sigma_n}).
\end{align*}

Next we make specific choices for $q, a_n, b_n, M_n$ and  $\sigma_n$.   Clearly $\sigma_n = O(n^{-1/5})$ for the 
optimal rate.  \ref{eq:thmcond-2}-\ref{eq:thmcond3} determine the values of the sequences  $M_n, a_n, b_n$ and $q$ using the upper bounds on $\norm{m_{\sigma}}_{2,q}$ and $P(\norm{W^J}_{\infty} > M_n )$  provided in Lemma \ref{lem:upperbd} and Lemma \ref{lem:comp} respectively. It can be verified that \ref{eq:thmcond0}, \ref{eq:thmcond1} and \ref{eq:thmcond2} are satisfied with $q$ greater than the positive root of the quadratic equation
\begin{eqnarray}
(9/10)q^2 -(10H_2 +24/5)q - 2(2H_2+7) =0,
\end{eqnarray}
which is satisfied by $q \approx 95$.  Choosing  $q=150$,  $H_1 =  H_2 =12$,  $a_n=n^{\alpha}$ for some $\alpha > 1$,
$M_n^2 = O(\log n), b_n=O(n^{-1/10}(\log n)^{1/2})$, we can see that the convergence rate is $\epsilon_n= n^{-2/5}\log^{t_0} n$ for some global constant $t_0$.
\end{proof}

\section{Special cases}\label{sec:splcases}
A desirable property of any nonparametric model is that it can ``collapse'' back to a simpler structure when the additional flexibility is not warranted.  For example, the nonparametric prior may be centered on a smaller class of densities (e.g., a parametric family), with a faster rate of convergence obtained when the true density falls within this smaller class.  In this section, we study such collapsing behavior in a couple of cases.

\subsection{Properly centering the prior leads to parametric rate of convergence}
We have already noted in the introduction that one can center the non-parametric model on a parametric family by centering the prior on the transfer function on a parametric class of quantile (or inverse c.d.f.) functions $\{F_{\theta}^{-1} ~:~ \theta \in \Theta\}$. Here we show that if our guess about the true density $f_0$ is correct,  we can actually achieve a parametric rate of convergence by centering the prior for $\mu$ on $F_{0}^{-1}$.  Centering the prior on the true quantile function expands the RKHS to include the best approximation which is the true quantile function itself.   
We formalize our result in the following Theorem \ref{thm:spcl1}.

\begin{ass}\label{ass:1pr}
Define $\mu_0$ to be $F_{0}^{-1}$. We assume $\mu$ follows a Gaussian process $\mbox{GP}(\mu_0, c)$ centered at $\mu_0$ and with a squared exponential covariance kernel $c(\cdot, \cdot; A)$ and a Gamma prior for the inverse-bandwidth $A$, so that,
$c(t, s; A) = e^{-A(t-s)^2}, t,s \in [0, 1], A \sim \mbox{Ga}(p,q)$.
\end{ass}

\begin{ass}\label{ass:2pr}
$\frac{\sigma^2}{n^{-1/4}\log n} \sim \mbox{IG}(a,b)$.
\end{ass}

 \begin{thm}\label{thm:spcl1}
If $f_0$ is compact and satisfies Assumption \ref{ass:1} and the priors $\Pi_\mu$ and $\Pi_{\sigma}$ are as in
Assumptions \ref{ass:1pr} and Assumption \ref{ass:2pr} respectively with the correct centering $f_0$, the best obtainable posterior rate of convergence relative to $h$ is
\begin{eqnarray}\label{eq:optrate}
\epsilon_{n} = n^{-\frac{1}{2}}(\log n)^{t_0}.
\end{eqnarray}
for some global constant $t_0$.
\end{thm}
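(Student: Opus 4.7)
The plan is to verify the three Ghosal--van der Vaart conditions \ref{eq1}--\ref{eq3} exactly as in the proof of Theorem \ref{thm:compact}, but with the target rate $\epsilon_n = n^{-1/2}(\log n)^{t_0}$. The key conceptual point is that under Assumption \ref{ass:1pr} we may write $\mu = \mu_0 + W^A$ where $W^A$ is the zero-mean rescaled squared-exponential Gaussian process, so that $\norm{\mu-\mu_0}_{\infty} = \norm{W^A}_{\infty}$. Controlling $\norm{\mu-\mu_0}_{\infty}$ therefore reduces to a small-ball probability of $W^A$ at the origin, rather than to the concentration of $W^A$ around the fixed target $\mu_0 \in C^3[0,1]$ used in the proof of Theorem \ref{thm:compact}. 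Because a small-ball probability at 0 for a centered GP with squared-exponential kernel and Gamma-distributed inverse-bandwidth is only logarithmically small in the radius (rather than exponentially small, as happens at a nonzero target), this is precisely the mechanism by which the rate improves from $n^{-2/5}$ to the near-parametric $n^{-1/2}$.

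For condition \ref{eq3}, I would reuse the decomposition from Theorem \ref{thm:compact}: by \ref{eq:H2}--\ref{eq:H} and Lemma \ref{lem:hellinger},
\begin{equation*}
h^2(f_0, f_{\mu,\sigma}) \precsim \sigma^4 + \frac{\norm{\mu-\mu_0}_{\infty}^2}{\sigma^2},
\end{equation*}
and Lemma \ref{lem:logsup} with \ref{eq:KtoH} upgrades this Hellinger bound to KL and $V$ bounds, since $f_0$ is compactly supported so Assumption \ref{ass:2} holds automatically. Choose $\sigma_n$ so that $\sigma_n^4 \precsim \epsilon_n^2$; under Assumption \ref{ass:2pr} the prior puts order-one mass on $\sigma^2 \asymp n^{-1/4}\log n$, so an event of the form $\{\sigma \in [\sigma_n, 2\sigma_n]\}$ has mass bounded below by a quantity of the form $\exp\{-C(\log n)^{t_0}\}$ after suitable recalibration of $t_0$. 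Simultaneously we need
\begin{equation*}
\mbox{P}\bigl(\norm{W^A}_{\infty} \leq C\sigma_n^{3}\bigr) \geq \exp\{-Cn\epsilon_n^2\} = \exp\{-C(\log n)^{2t_0}\},
\end{equation*}
which follows from the standard concentration function bound for adaptive squared-exponential GPs of \cite{van2009adaptive}, since the infimum over the RKHS at displacement 0 is zero, leaving only the centered small-ball term, which is of polylogarithmic order in $1/\sigma_n$. Multiplying these two independent lower bounds verifies \ref{eq3}.

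For \ref{eq1} and \ref{eq2}, I would take the sieves $\mathcal{F}_n = \{f_{\mu,\sigma} : \mu \in \mu_0 + B_n,\ l_n < \sigma < h_n\}$ with $B_n$ the same Borel set built from RKHS unit balls and sup-norm balls as in the proof of Theorem \ref{thm:compact}. Because $\mu_0 \in C[0,1]$ is a deterministic shift, the sup-norm entropy of $\mu_0 + B_n$ equals that of $B_n$, and the complement probability $\Pi_\mu\{\mu - \mu_0 \notin B_n\}$ coincides with the already-computed $\mbox{P}(W^A \notin B_n)$ in \ref{eq:compact1}. The tail bound \ref{eq:compact2} for the new $n$-dependent prior on $\sigma$ still gives exponentially decaying tails outside a window $[l_n,h_n]$ of appropriate width, since an inverse-gamma scale factor does not alter the qualitative tail behavior. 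Thus the entropy and complement calculations of the proof of Theorem \ref{thm:compact} transfer with no essential change, once the constants are rescaled. The hard part, and the only place where the argument is really distinct from the non-centered case, is balancing the polylogarithmic small-ball exponent of the centered GP against $n\epsilon_n^2 = (\log n)^{2t_0}$ while simultaneously ensuring the $n$-dependent prior on $\sigma^2$ in Assumption \ref{ass:2pr} delivers enough mass in the shrinking window where $\sigma_n^4 \precsim \epsilon_n^2$; once $t_0$ is chosen larger than the concentration exponent of $W^A$ and a suitable power coming from the gamma-bandwidth prior, both requirements can be met simultaneously, yielding the claimed rate.
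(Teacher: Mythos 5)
Your overall strategy is exactly the paper's: write $\mu = \mu_0 + W^A$ so that the displacement event $\{\norm{\mu - \mu_0}_{\infty} \leq \delta\}$ becomes a centered small-ball event for $W^A$, observe that the RKHS-approximation infimum vanishes at the origin so only the polylogarithmic small-ball exponent $\exp\{-C a_1 \log^2(a_1/\delta)\}$ survives, shift the sieve $B_n$ by $\mu_0$ so that entropy and complement-probability computations transfer unchanged, and reuse the Hellinger/KL decomposition $h^2(f_0, f_{\mu,\sigma}) \precsim \sigma^4 + \norm{\mu-\mu_0}_{\infty}^2/\sigma^2$ from the compact case. All of this matches the paper's proof.

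Where your write-up has a genuine gap is the lower bound on the $\sigma$-prior mass. You simultaneously demand $\sigma_n^4 \precsim \epsilon_n^2 = n^{-1}(\log n)^{2t_0}$ --- forcing $\sigma_n^2 \precsim n^{-1/2}(\log n)^{t_0}$ --- and assert that the mass of $[\sigma_n, 2\sigma_n]$ is $\geq \exp\{-C(\log n)^{t_0}\}$ because "the prior puts order-one mass on $\sigma^2 \asymp n^{-1/4}\log n$." These two statements are not compatible: the window $\sigma^2 \approx n^{-1/2}$ is not where the bulk of the rescaled prior sits but deep in its left tail. Writing $\sigma^2 = n^{-1/4}(\log n)\,V$ with $V \sim \mbox{IG}(a,b)$, the event $\sigma \in [\sigma_n, 2\sigma_n]$ corresponds to $V \in [n^{-1/4}/\log n,\, 4 n^{-1/4}/\log n]$, and since the inverse-gamma left tail decays like $e^{-b/v}$ this probability is $\exp\{-O(n^{1/4}\log n)\}$, not $\exp\{-O((\log n)^{t_0})\}$; condition \ref{eq3} would then fail for the claimed rate. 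The argument does close if the rescaling in Assumption \ref{ass:2pr} is taken to be $n^{-1/2}\log n$ (so $V_n \asymp 1/\log n$ and $P(V \in [V_n, 4V_n]) \approx e^{-b\log n}$, which does dominate $\exp\{-C(\log n)^{t_0}\}$ once $t_0 \geq 1$). You should either carry out the inverse-gamma tail computation explicitly under the assumption as stated, or point out that the scaling constant needs adjustment; leaning on "the bulk of the prior" is the wrong heuristic here because the required $\sigma_n$ deliberately undershoots that bulk.
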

\begin{proof}
The portion from which the proof differs from the proof of Theorem \ref{thm:compact} is the calculation of the prior concentration.
Let $\mu = \mu_0 + W$ where $W \sim \mbox{GP}(0, c)$.
It is easy to see that
\begin{eqnarray*}
P(\norm{\mu - \mu_0}_{\infty} \leq 2\epsilon)  = P(\norm{W}_{\infty} \leq 2\epsilon) \geq e^{-C_1a_1\log^2(a_1/\epsilon)}P(a_0 < A < a_1).
\end{eqnarray*}
Hence with $\sigma_n = n^{-1/4}, l_n =n^{-1/4}, h_n = n^{\beta}$ for some $\beta > 0$, we can show that
\begin{eqnarray*}
P(\sigma \in [\sigma_n, 2\sigma_n]) \geq \exp\{ -(\log n)^{t_1}\}
\end{eqnarray*}
for some $t_1 > 0$.

Define the Gaussian process sieve to be 
\begin{eqnarray*}
B_n = f_0 + \bigg[\bigg(M_n\sqrt{\frac{r_n}{\xi_n}}\mathbb{H}_1^r + \bar{\delta}_n\mathbb{B}_1\bigg) \cup
\bigg(\cup_{a < \xi_n}(M_n \mathbb{H}^a_1) + \bar{\delta}_n\mathbb{B}_1 \bigg)\bigg],
\end{eqnarray*}
where the sequences $\xi_n, \bar{\delta}_n, M_n$ are exactly as specified in the proof of Theorem \ref{thm:compact}. 
It follows from the proof of Theorem \ref{thm:compact} that  $\epsilon_n = n^{-1/2} (\log n)^{t_0}$ for some constant $t_0 > 0$.
\end{proof}
\begin{rem}
The extension to the case when the true density is actually non-compact and satisfies the tail condition in Assumption \ref{ass:3} can be handled following the steps in the proof of Theorem \ref{thm:non-compact} with $\Pi_\mu$ in Assumption \ref{ass:1pnc} centered at the non-compact $f_0$ for appropriate choices of sequences $a_n, b_n$ and $\lambda_j$ and $\Pi_{\sigma}$ as in Assumption \ref{ass:2p}.  However one can show that the best obtainable rate of convergence can only be made arbitrarily close to the parametric rate in the sense that the rate of convergence would be slower compared to the parametric rate by a factor $n^{\beta}$ where $\beta$ can be arbitrarily small. 
\end{rem}

\begin{rem}
A more interesting and practical extension of Theorem \ref{thm:spcl1} is the case where one has correctly guessed a parametric family which contains the true density. Suppose the parametric family is given by $\{f_{\theta}: \theta \in \mathbb{R}^p\}$ indexed by the parameter $\theta$ living in some Euclidean space, and the true density $f_0 = f_{\theta_0}$. It is natural then to center the prior for $\mu$ on $F_{\theta}^{-1}$, with a hyperprior on $\theta$ quantifying uncertainty about the value of the finite-dimensional parameter $\theta$. A straightforward application of Theorem \ref{thm:spcl1} shows that it is possible to attain parametric rate of convergence under the same assumptions as in Theorem \ref{thm:spcl1} if the prior on $\theta$ has full support on $\mathbb{R}^p$ and the mapping $\theta \to f_{\theta}$ satisfies mild regularity conditions, e.g., as in \citet{ghosal2000convergence}. In particular, one obtains the near parametric rate if $\theta \sim N_{p}(\mu_0, \Sigma_0)$. If the prior guess about the parametric family is incorrect, then one would still get the near minimax rate for the class of twice continuously differentiable densities. 
\end{rem}

\subsection{True density is a Gaussian convolution with a finite mixture of truncated Gaussians}
\cite{ghosal2001entropies} showed that when the true density is a location-scale or location mixture of normals $\int \frac{1}{\sigma}\phi\left( \frac{y-\mu}{\sigma}\right)F_0(\mu, \sigma)$ with the scale parameter lying between two fixed numbers and the mixing distribution $F_0$ being either compactly supported  or having sub-Gaussian tails, a Dirichlet process mixture of normals can achieve near-parametric rate of convergence.  To mimic the above super-smooth case for our non-linear latent variable model, we shall consider a simplistic situation when the true density is a Gaussian convolution with a finite mixture of truncated Gaussians with the same truncation bounds.  We show below that the rate of convergence in that case can be as close as possible to the parametric rate.  The actual super-smooth case would be the situation when the true density is a finite mixture of Gaussians, so that it can be expressed as a convolution with a finite mixture of Gaussians.  Remark \ref{rem:supersmooth} discusses very briefly about that case.
\begin{thm}
Given any $\alpha > 0$.
If $f_0$ is $\phi_{\sigma_0} * f_1$ where $f_1$ is a finite mixture of truncated Gaussians with the same truncation bounds and  the prior  $\Pi_\mu$ is as in Assumptions \ref{ass:1p} and $\frac{\sigma}{n^{-1/(2\alpha +1)}\log n} \sim \mbox{IG}(a,b)$ respectively, then the best obtainable rate of posterior convergence is
\begin{eqnarray}
n^{-\frac{\alpha}{2\alpha + 1}} \log^{t_{\alpha}}(n)
\end{eqnarray}
where $t_{\alpha}$ is a constant depending on $\alpha$.
\end{thm}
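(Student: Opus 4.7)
The plan is to follow the strategy of Theorem \ref{thm:non-compact}, with the principal simplification being the exact factorization $f_0 = \phi_\sigma * g_\sigma$ available for every $\sigma < \sigma_0$, where $g_\sigma = \phi_{\sqrt{\sigma_0^2 - \sigma^2}} * f_1$. Writing $\mu_\sigma^{\star} = G_\sigma^{-1}$ for the quantile function of $g_\sigma$, one has $f_{\mu_\sigma^{\star}, \sigma} = f_0$ identically, so there is no bandwidth-induced bias and the prior-mass condition reduces to approximating $\mu_\sigma^{\star}$ in sup-norm on an appropriate compact set. Because the prior on $\sigma$ concentrates near $\sigma_n = n^{-1/(2\alpha+1)} \log n$, which is eventually below $\sigma_0$, the relevant density $g_{\sigma_n}$ is a convolution of a Gaussian with the compactly supported $f_1$, hence analytic, bounded below on every compact set, and with sub-Gaussian tails.

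To apply the architecture of Section \ref{sec:noncompact}, I would first decompose $K(f_0, f_{\mu, \sigma})$ and $V(f_0, f_{\mu, \sigma})$ along $E_\sigma' = [-a_\sigma, a_\sigma]$ with $a_\sigma \asymp |\log \sigma|^{1/2}$: the Gaussian tails of $f_0$ absorb the integrals over $(E_\sigma')^c$, while on $E_\sigma'$ I would replace $g_{\sigma_n}$ by its restriction-normalization $\tilde g_{\sigma_n}$ and use $m_{\sigma_n}$, the quantile function of $\tilde g_{\sigma_n}$, as the smooth surrogate. Crucially, the MAXENT step of Lemma \ref{lem:maxent} is not needed here because $\tilde g_{\sigma_n}$ is already infinitely smooth; an analogue of Lemma \ref{lem:upperbd} then bounds the $C^{r}[0,1]$-norm of $m_{\sigma_n}$ by a polynomial in $\sigma_n^{-1}$ (up to logarithms) for every finite $r$. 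By Lemma \ref{lem:hellinger}, the required sup-norm precision on $\mu - m_{\sigma_n}$ is of the order $\sigma_n \epsilon_n$.

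Finally, I would invoke the squared-exponential GP concentration bound with Gamma bandwidth hyperprior (Section 5.1 of \citet{van2009adaptive}): for any $r \geq 1$ and infinitely smooth target $m_{\sigma_n}$, $P(\|\mu - m_{\sigma_n}\|_\infty \leq 2\delta) \gtrsim \exp\{-C_r (1/\delta)^{1/r}\}$, with $C_r$ controlled by the $C^r$-norm of $m_{\sigma_n}$. Choosing $r$ large (depending on $\alpha$), $\delta = \sigma_n \epsilon_n$ with $\epsilon_n = n^{-\alpha/(2\alpha+1)} \log^{t_\alpha} n$, and combining with the IG tail bound \ref{eq:compact2} rescaled by $\sigma/(n^{-1/(2\alpha+1)} \log n)$, produces the prior-mass condition \ref{eq3}. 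The sieve, the entropy bound \ref{eq1} and the complement probability \ref{eq2} are built exactly as in the proof of Theorem \ref{thm:compact} with the new $\sigma_n$ and an $\alpha$-adjusted Gaussian sieve radius. The main obstacle I anticipate is tracking how $C_r$ and $\|m_{\sigma_n}\|_{C^r}$ scale with $\sigma_n^{-1}$ and $\log n$ so that $\exp\{-C_r(1/(\sigma_n\epsilon_n))^{1/r}\} \geq \exp\{-n\epsilon_n^2\}$; balancing $r$ against $\alpha$ in this inequality fixes $t_\alpha$ and yields the stated rate via the basic posterior rate theorem of \citet{ghosal2000convergence}.
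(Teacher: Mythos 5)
Your proposal takes a fundamentally different (and harder) route than the paper, and it has a gap that you yourself flag but do not close.

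The key observation you miss is that there is no need to send $\sigma \to 0$ at all. Because $f_1$ is a finite mixture of truncated Gaussians with common truncation bounds, it is \emph{compactly supported} and strictly positive on its support, so its quantile function $\mu_1 = F_1^{-1}$ is a fixed, bounded, $C^\infty$ element of $C[0,1]$. Since $f_0 = \phi_{\sigma_0} * f_1 = f_{\mu_1, \sigma_0}$ \emph{exactly}, the paper simply bounds
\begin{equation*}
h^2(f_0, f_{\mu,\sigma}) \precsim h^2(f_{\mu_1,\sigma_0}, f_{\mu,\sigma_0}) + h^2(f_{\mu,\sigma_0}, f_{\mu,\sigma}) \precsim \norm{\mu - \mu_1}_\infty^2 + \frac{\abs{\sigma_0 - \sigma}}{\sigma},
\end{equation*}
and concentrates the prior near the \emph{fixed} pair $(\mu_1, \sigma_0)$: the GP concentration constants depend only on the (fixed) H\"older norms of $\mu_1$, giving $P(\norm{\mu - \mu_1}_\infty < \sigma_n^\alpha) \geq \exp(-\sigma_n^{-1})$ with $\sigma_n = n^{-1/(2\alpha+1)}$, while the scaled inverse-Gamma prior still puts polynomial (hence more than $\exp(-n\epsilon_n^2)$) mass in an $O(n^{-1})$-neighborhood of $\sigma_0$ by its heavy right tail. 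No compactification of a non-compactly supported mixing density, no MAXENT, and no sequence of surrogate quantile functions is required.

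By insisting on the factorization $f_0 = \phi_\sigma * g_\sigma$ with $\sigma = \sigma_n \to 0$, you reintroduce exactly the difficulties the paper avoids: $g_{\sigma_n} = \phi_{\sqrt{\sigma_0^2 - \sigma_n^2}} * f_1$ is supported on all of $\mathbb{R}$, forcing the truncation $\tilde g_{\sigma_n}$ on $[-a_{\sigma_n}, a_{\sigma_n}]$ and a sequence of targets $m_{\sigma_n}$ whose derivatives near $0$ and $1$ behave like inverse powers of $\tilde g_{\sigma_n}(\pm a_{\sigma_n}) \asymp \sigma_n^{H_2}$, as in Lemma \ref{lem:upperbd}. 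This means $\norm{m_{\sigma_n}}_{C^r}$ blows up polynomially in $\sigma_n^{-1}$, i.e.\ polynomially in $n$. For the squared-exponential GP of Assumption \ref{ass:1p} — the prior this theorem actually assumes — the decentering part of the concentration function scales with the H\"older norm of the target, and there is no rescaling mechanism (unlike the series prior of Assumption \ref{ass:1pnc}, whose factors $a_n^{-q}b_n^{-2}$ are explicitly chosen in Lemma \ref{lem:priorconc} to absorb exactly this growth). The inequality you want, $\exp\{-C_r(1/(\sigma_n\epsilon_n))^{1/r}\} \geq \exp\{-n\epsilon_n^2\}$, then fails because $C_r$ is not a constant but a polynomial in $n$; this is the ``main obstacle'' you acknowledge, and it is in fact fatal for the route you chose. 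The resolution is simply to not go there: hold $\sigma$ near $\sigma_0$ and target the fixed $\mu_1$.
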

\begin{proof}
Clearly $f_1$ is an infinitely smooth density which has quantile function $\mu_1=F_1^{-1}$ infinitely smooth in $[0, 1]$ and hence $ \mu_1 \in C^{\alpha}([0, 1])$ for any $\alpha > 0$.
Observe that
\begin{eqnarray*}
h^{2}(f_0, f_{\mu, \sigma}) &=& h^{2}(\phi_{\sigma_{0}}* f_1, f_{\mu, \sigma}) \\
 &=& h^{2}(f_{\mu_1, \sigma_{0}}, f_{\mu, \sigma_{0}})  + h^{2}(f_{\mu, \sigma_{0}}, f_{\mu, \sigma}) \\
 &\precsim& \norm{\mu - \mu_1}_{\infty}^2 + \frac{\abs{\sigma_0 - \sigma}}{\sigma}
\end{eqnarray*}
From \cite{van2009adaptive}, we obtain $P(\norm{\mu- \mu_1}_{\infty} < \sigma_n^{\alpha}) \geq  \exp(-\sigma_n^{-1})$ and
$P( \abs{\sigma_0 - \sigma} < \sigma_n^{2\alpha +1}) \geq \exp(-\sigma_n^{-1})$ for $\sigma_n = n^{-1/(2\alpha + 1)}$.
With the same sieve as in the proof of Theorem \ref{thm:compact}, it follows that  $\epsilon_n = n^{-\alpha/(2\alpha +1)} (\log n)^{t_{\alpha}}$ for some constant $t_{\alpha} > 0$.
\end{proof}

\begin{rem} \label{rem:supersmooth}
The extension to the case when the true density is a finite mixture of Gaussians can be handled following the steps in the proof of Theorem \ref{thm:non-compact} with $\Pi_\mu$ and $\Pi_{\sigma}$ in Assumptions \ref{ass:1pnc} and Assumption \ref{ass:2p} respectively for appropriate choices of sequences $a_n, b_n$ and $\lambda_j$ respectively.
\end{rem}

\section{Discussion}\label{sec:discussion}
Non-linear latent variable models offer a flexible modeling framework in a broad variety of problems and improved practical performance has been demonstrated by \citet{lawrence2004gaussian,lawrence2005probabilistic,lawrence2007hierarchical,ferris2007wifi,kundu2011bayesian} among others. The univariate density estimation model studied here can be extended to multivariate density estimation, latent factor modeling and density regression problems; we are currently studying theoretical properties of these extensions building upon the results developed in this article in the baseline case.

In standard Gaussian process regression, the regression function is assumed to be continuous on a compact domain, and one can use standard results on concentration bounds for Gaussian processes \citep{van2008reproducing}. However, we cannot use these results directly as the quantile function of a density supported on the entire real line is unbounded near zero and one. To address this problem, we required assumptions on the tails of the true density and exploited the interplay between the tails of a density and the boundary behavior of the corresponding quantile function. Building a sequence of compact approximations to the true density, accurate concentration bounds around the corresponding quantile functions (which are in $C[0, 1]$) are developed for the Gaussian process prior on the transfer function. While deriving this bound, one has to carefully calibrate the rate at which the RKHS norms of the sequence of approximating quantile functions increase to infinity. A truncated series prior is convenient for this purpose, however one needs to appropriately rescale the prior as in \ref{ass:1pnc} for optimal rate.  It would be interesting to study whether one obtains the same for a host of other commonly used Gaussian process priors. It should be noted here that posterior consistency with commonly used Gaussian process priors is immediate using our treatment of the non-compact case.

We finally note that although our results assume twice continuously differentiability of the true density, one can obtain optimal rate of convergence for arbitrary degree of smoothness of the truth. From the treatment of $\int f_0 \log(f_0 / f_{\mu, \sigma})^2$ in (\ref{eq:decomp2}), it is not difficult to see that all the terms barring $\int f_0 \log(f_0 / f_{\mu_0, \sigma} )^2$ can be made $O(\sigma^H)$ for arbitrarily large $H$. The first term $\int f_0 \log(f_0 / f_{\mu_0, \sigma} )^2$ cannot be improved beyond $O(\sigma^4)$ even if the true density is more than twice continuously differentiable, since $h(f_0, \phi_{\sigma}*f_0)$ can only be $O(\sigma^2)$. This is a well-known issue with Gaussian convolutions and one can improve the approximation bound by using a higher order kernel $\psi_{\sigma}$ \citep{fan1992bias,marron1992exact}, so that $\norm{ f_0 - \psi_{\sigma}*f_0 } = O(\sigma^H)$ for $H$ arbitrarily large. A thorny issue with using higher-order kernels in the frequentist literature is that $\psi_{\sigma}*f_0$ is not guaranteed to be positive everywhere, but one can bypass that easily in a Bayesian framework as one only needs to show that the prior support contains densities that are appropriately close to the true density. Letting $\phi_{\sigma}*f_1 = \psi_{\sigma}*f_0$, one can solve for $f_1$ using inverse-Fourier transforms and one has
$\norm{ f_0 - \phi_{\sigma}*f_1 } = O(\sigma^H)$. Although $ \int f_1 = 1$, $f_1$ can be negative at some places. \citet{kruijer2010adaptive} showed that under suitable conditions on the true density, one obtains the same approximation error for $f_2$, the positive part of $f_1$ normalized to integrate to one. \citet{kruijer2010adaptive} used the twicing kernel method \citep{newey2004twicing} to obtain $f_1$ in a closed-form, one can use the same trick here or use other higher-order kernels to obtain $f_1$. One can then simply replace $\mu_0$ with the quantile function of $f_2$ and proceed with the rest of the analysis identically.

\appendix

\section*{Appendix}
\subsection{Proof of Theorem \ref{thm:support}}
\begin{proof}
Let $f_0$ be a density with quantile function $\mu_0$ that satisfies the conditions of Theorem \ref{thm:support}. Observe that $\norm{\mu_0}_1 = \int_{t=0}^1 \abs{\mu_0(t)} dt = \int_{-\infty}^{\infty} \abs{z} f_0(z) dz < \infty$ since $f_0$ has a finite first moment, and thus $\mu_0 \in \mbox{L}_1[0, 1]$. Fix $\epsilon > 0$. We want to show that $\Pi\{ B_{\epsilon}(f_0) \} > 0$, where $B_{\epsilon}(f_0) = \{f ~:~ \norm{f - f_0}_1 < \epsilon\}$.

Note that $\mu_0 \notin C[0, 1]$, so that $\mbox{pr}( \norm{\mu - \mu_0}_{\infty} < \epsilon)$ can be zero for small enough $\epsilon$. The main idea is to find a continuous function $\tilde{\mu_0}$ close to $\mu_0$ in $L_1$ norm and exploit the fact that the prior on $\mu$ places positive mass to arbitrary sup-norm neighborhoods of $\tilde{\mu_0}$. The details are provided below.

Since $\norm{\phi_{\sigma}*f_0 - f_0}_1 \to 0$ as $\sigma \to 0$, find $\sigma_1$ such that $\norm{\phi_{\sigma}*f_0 - f_0}_1 < \epsilon/2$ for $\sigma < \sigma_1$. Pick any $\sigma_0 < \sigma_1$.
Since $C[0, 1]$ is dense in $\mbox{L}_1[0, 1]$, for any $\delta > 0$, we can find a continuous function $\tilde{\mu_0}$ such that $\norm{\mu_0 - \tilde{\mu_0}}_1 < \delta$. Now, $\norm{ f_{\mu, \sigma} - f_{\tilde{\mu_0}, \sigma} }_1 \leq C \norm{ \mu - \tilde{\mu_0} }_1/\sigma$ for a global constant $C$. Thus, for $\delta = \epsilon \sigma_0/4$,
\begin{align*}
\big\{f_{\mu, \sigma} ~:~ \sigma_0 < \sigma < \sigma_1, \norm{\mu - \tilde{\mu_0}}_{\infty} < \delta \big\} \subset \big\{f_{\mu, \sigma} ~:~ \norm{f_0 - f_{\mu, \sigma}}_1 < \epsilon \big\},
\end{align*}
since $\norm{f_0 - f_{\mu, \sigma}}_1 < \norm{f_0 - f_{\mu_0, \sigma}}_1 + \norm{f_{\mu_0, \sigma} - f_{\tilde{\mu_0}, \sigma}}_1 + \norm{f_{\tilde{\mu_0}, \sigma} - f_{\mu, \sigma}}_1$ and $f_{\mu_0, \sigma} = \phi_{\sigma}*f_0$. Thus, $\Pi\{ B_{\epsilon}(f_0) \} > \mbox{pr}(\norm{\mu - \tilde{\mu_0}}_{\infty} < \delta) ~ \mbox{pr}(\sigma_0 < \sigma < \sigma_1) > 0$, since $\Pi_{\mu}$ has full sup-norm support and $\Pi_{\sigma}$ has full support on $[0, \infty)$.
\end{proof}

\subsection{Proof of Lemma \ref{lem:comp}}
\begin{proof}
From Theorem 5.2 of \cite{adler1990introduction} it follows that if $X$ is a centered Gaussian process on a compact set $T \subset \mathbb{R}^d$ and   $\sigma_{T}^2$ is the maximum variance attained by the Gaussian process on $T$, then for large $M$,
\begin{eqnarray}\label{eq:comp1}
P(\norm{X}_{\infty} > M ) \leq 2N(1/M, T, \norm{\cdot}) \exp\bigg[-\frac{1}{2\sigma_T^2}\{M - \nu(M)\}^2\bigg],
\end{eqnarray}
where  $\nu(M) = C_5\int_{0}^{1/M}
\{\log N(1/M, T, \norm{\cdot})\}^{1/2}d(1/M)$ for some constant $C_5 > 0$.
Observe that $W^J$ is rescaled to $T=[0, a]$ and the maximum variance attained by $W^J$ is $b^2\sigma_{J}^2$. Note that $N(1/M, T, \norm{\cdot}) = aM$. Now
\begin{eqnarray*}
\nu(M) &\leq& C_6 \int_{0}^{1/M}\{\log(aM)\}^{1/2}d(1/M) \\
&\leq& C_6\int_{0}^{1/M} \{(\log a)^{1/2} + (\log M)^{1/2}\}d(1/M)  \\
&\leq& C_6 \frac{1}{M}\{(\log a)^{1/2}+ (\log M)^{1/2}\}
\end{eqnarray*}
for some constant $C_6 > 0$.
Plugging in the value of $N(1/M, T, \norm{\cdot})$ and the bound for $\nu(M)$  in \ref{eq:comp1}, we get the required bound for
$P(\norm{W^J}_{\infty} > M)$.
\end{proof}

\subsection{Proof of Lemma \ref{lem:maxent}}
\begin{proof}
From \cite{mead1984maximum}, for any $k \geq 1$, we can get an infinitely smooth density $f_{m_{\sigma}}$ supported on $[-a_{\sigma}, a_{\sigma}]$ such that
\begin{eqnarray}\label{eq:app1}
\int_{-a_{\sigma}}^{a_{\sigma}}x^jf_{m_{\sigma}}(x)dx = \int_{-a_{\sigma}}^{a_{\sigma}}x^jf(x)dx, \, j=0, \ldots, k.
\end{eqnarray}
One possible choice of $f_{m_{\sigma}}$ in \ref{eq:app1} has the form $f_{m_{\sigma}}(x) = \exp(-\sum_{l=1}^k b_l x^l)$ which corresponds to the maximum entropy moment matching (MAXENT) density. We shall choose $k$ sufficiently large depending on $\sigma$ so that one has the desired approximation result.

Consider an interval $I_{\sigma} = [-(a_{\sigma} + t_{\sigma}), (a_{\sigma} + t_{\sigma})]$ containing the interval
$[-a_{\sigma}, a_{\sigma}]$ for some $t_{\sigma}>0$ to be chosen later depending on $\sigma$.
Observe that
\begin{eqnarray}\label{eq:mead1}
\sup_{x \in I_{\sigma}^c} \abs{\phi_{\sigma}*f(x) - \phi_{\sigma}*f_{m_{\sigma}}(x)} \leq
\sup_{x \in I_{\sigma}^c} \int_{-a_{\sigma}}^{a_{\sigma}} \phi_{\sigma}(x-y)\abs{f(y) - f_{m_{\sigma}}(y)}dy \leq 2\phi_{\sigma}(t_{\sigma}).
\end{eqnarray}
Next, along the lines of \citet{ghosal2007posterior}
\small
\begin{eqnarray}
\sup_{x \in I_{\sigma}} \abs{\int_{-a_{\sigma}}^{a_{\sigma}} \phi_{\sigma}(x-y)f(y) - f_{m_{\sigma}}(y)dy} &\leq&
2\sup_{x \in I_{\sigma}, \abs{y} \leq a_{\sigma}}\abs{\phi_{\sigma}(x-y) - \sum_{j=0}^{k}\frac{1}{\sqrt{2\pi}}\frac{(-1)^j\sigma^{-(2j+1)}(x-y)^{2j}}{j!}} \nonumber\\
&\leq& \frac{2C_1}{\sigma}\sup_{x \in I_{\sigma}, \abs{y} \leq a_{\sigma}}\bigg(\frac{e|x-y|^2}{2k\sigma^2}\bigg)^k \nonumber\\
&\leq& C_2 \sigma^{-(2k+1)} \bigg(\frac{e}{2}\bigg)^k \frac{(2a_{\sigma}+t_{\sigma})^{2k}}{k^k} \label{eq:mead2},
\end{eqnarray}
\normalsize
for some global constants $C_1, C_2 > 0$.

Now choose $t_{\sigma} = Aa_{\sigma}$ for some constant $A > 0$. Then, from \ref{eq:mead1} and \ref{eq:mead2}, we obtain,
\begin{eqnarray*}
\norm{\phi_{\sigma}*f - \phi_{\sigma}*f_{m_{\sigma}}}_{\infty} \leq \max\bigg[2\phi_{\sigma}(t_{\sigma}),
\frac{C_2}{\sigma} \exp\bigg\{ 2k\log \frac{(2+A)a_0\abs{\log \sigma}^{1/\tau_2}}{\sigma} - k\log \frac{k}{B} \bigg\} \bigg],
\end{eqnarray*}
where $B= e/2$.
Choosing $k= B\sigma^{-\alpha}\abs{\log \sigma}^{\alpha/\tau_2}$,
\begin{eqnarray*}
k\log \frac{k}{B} =B\sigma^{-\alpha}\abs{\log \sigma}^{\alpha/\tau_2} \{ \alpha \log (1/\sigma) + (\alpha / \tau_2)\log (\abs{\log \sigma})\},
\end{eqnarray*}
and
\begin{eqnarray*}
2k\log \frac{(2+A)a_0\abs{\log \sigma}^{1/\tau_2}}{\sigma} =2B\sigma^{-\alpha}\abs{\log \sigma}^{\alpha/\tau_2} \{ \log\{(2+A)a_0\} + (1 / \tau_2)\log (\abs{\log \sigma})\}.
\end{eqnarray*}
Clearly, if $\alpha \geq 2$ and $\sigma$ is sufficiently small, $2k\log \frac{(2+A)a_0\abs{\log \sigma}^{1/\tau_2}}{\sigma} < k\log \frac{k}{B}$.  Then by choosing $\alpha > 2$, we can make $2\phi_{\sigma}(t_{\sigma}) > \frac{C_2}{\sigma} \exp\bigg\{ 2k\log \frac{(2+A)a_0\abs{\log \sigma}^{1/\tau_2}}{\sigma} - k\log \frac{k}{B}\bigg\}$ and hence
\begin{eqnarray}
\norm{\phi_{\sigma}*f - \phi_{\sigma}*f_{m_{\sigma}}}_{\infty} \leq 2\phi_{\sigma}(t_{\sigma}) = \frac{C_3}{\sigma} \exp\{-(a_0A)^2/2\abs{\log \sigma}^{2/\tau_2}\}.
\end{eqnarray}
Since $A$ is arbitrary, the conclusion of the theorem follows.

\end{proof}

\subsection{Proof of Lemma \ref{lem:priorconc}}
\begin{proof}
 $m_{\sigma}^{J}$ is contained in the RKHS of $W^J$,
\begin{align*}
& \inf\{\norm{w}^2_{\mathbb{H}}: \norm{w - m_{\sigma}^J}_{\infty} < \epsilon\} = \norm{m_{\sigma}^J}_{\mathbb{H}}^2 = \sum_{j=0}^{J}\frac{m_{\sigma,j}^2}{\lambda_j^2} \\ & = \sum_{j=0}^{J}j^{q/2}m_{\sigma,j}^2 \leq \sum_{j=0}^{J}j^qm_{\sigma,j}^2  \precsim  \frac{a^q}{b^2}\norm{m_{\sigma}^{J}}^{2}_{2,q}.
\end{align*}

Next we calculate $\mbox{P}(\norm{W^J}_{\infty} \leq \epsilon)$ using a technique similar to the proof on Theorem 4.5 in \cite{van2008rates}.
For any numbers $\alpha_j \geq 0$ with $\sum_{j=0}^{J}\alpha_j \leq 1$,we have
\begin{eqnarray*}
\mbox{P}(\norm{W^J}_{\infty} \leq \epsilon) &\geq& \mbox{P}(\sum_{j=0}^{J}\abs{Z_j\lambda_jb} < \epsilon) \\
&\geq& \prod_{j=0}^{J} \mbox{P}( \abs{Z_j\lambda_j} < \alpha_j\epsilon/b).
\end{eqnarray*}
Now, define a function $f : [0, \infty) \to \mathbb{R}$ given by $f(y) = - \log \mbox{P}(\abs{Z} < y) = - \log \{2 \Phi(y) - 1 \}$, where $Z \sim \mbox{N}(0,1)$. $f$ is a decreasing function and following \cite{van2008rates}, $f$ is bounded above by a multiple of $1 + \abs{\log y}$
for $y \in [0, c]$ and bounded above by a multiple of $e^{-y^2/2}$ for $y \geq c$ for some $c > 0$.
Thus, with $\alpha_j = (K + j^2)^{-1}$ for a large constant $K > 0$,
\begin{eqnarray*}
- \log P(\norm{W^J}_{\infty} \leq \epsilon) &\leq& \sum_{j=1}^{J} f(\alpha_j\epsilon j^{q/4}/b) + f(\epsilon/(Kb)) \\
&\leq & \int_{1}^{J} f\bigg(\frac{\epsilon x^{q/4}}{b(K + x^2)} \bigg) dx + f(\epsilon/b(K+1))   + f(\epsilon/(Kb)) ,
\end{eqnarray*}
where the last inequality in the above display follows from the fact that $f$ is decreasing and the map
$x \mapsto x^{q/4}/(K+x^2)$ is non-decreasing on $[1, \infty)$ for any $K > 0$ as long as $q > 4$. For $\epsilon$ small enough so that $\epsilon/(Kb) < c$, $f(\epsilon/(Kb)) < 1 + \log(Kb/\epsilon)$.

Now consider two cases to bound the integral in the last display. If $\epsilon J^{q/4} \leq (K + J^2)$,  $\epsilon x^{q/4}/(K + x^2) \leq 1$ for $x \in [1, J]$. Hence in that case,
\begin{align}
&\int_{1}^{J} f\bigg(\frac{\epsilon x^{q/4}}{b(K + x^2)} \bigg) dx \leq \int_{1}^{J} \bigg( 1 + \abs{\log \bigg(\frac{\epsilon x^{q/4}}{b(K + x^2)}\bigg)}\bigg) dx \\
& \leq \int_{1}^{J}\bigg(1 + \log\frac{b(K+1)}{\epsilon} \bigg)dx
\leq J \bigg(1 + \log\frac{b(K+1)}{\epsilon}\bigg).
\end{align}
On the other hand, if $\epsilon J^{q/4}/b > (K + J^2)$,
\begin{eqnarray}
\int_{1}^{J} f\bigg(\frac{\epsilon x^{q/4}}{K + x^2} \bigg) dx = \bigg(\frac{b}{\epsilon }\bigg)^{4/q} \times (4/q) \times \int_{\epsilon/b}^{\epsilon J^{q/4}/b} f\bigg(\frac{ y}{K + (by/\epsilon)^{8/q}}\bigg)y^{4/q -1 } dy.
\end{eqnarray}
The integral above is bounded by
\begin{align}
&\int_{0}^{b/\epsilon} f\bigg(\frac{y}{K + (b/\epsilon)^{16/q}} \bigg) y^{4/q -1 }dy + \int_{b/\epsilon}^{\infty}f\bigg(\frac{y}{K + y^{16/q}} \bigg) y^{4/q -1 }dy \\
&\leq z^{4/q}\int_{0}^{b/(z\epsilon)}f(x) x^{4/q -1} dx  + \int_{0}^{\infty}f\bigg(\frac{y}{K + y^{16/q}} \bigg) y^{4/q -1 }dy
\end{align}
for $z = ( K + (b/\epsilon)^{16/q})$.
The first integral is bounded as $\epsilon \downarrow 0$ and the second integral is finite for $q > 16$.
Hence for $q > 16$,
\begin{eqnarray*}
\phi_{m_{\sigma}^{J}}(\epsilon) \precsim  \frac{1}{a^qb^2}\norm{m_{\sigma}^{J}}_{2,q}^2 +
\begin{cases}
J \big(1 + \log \frac{b}{\epsilon}\big), \, \epsilon J^{q/4} \precsim bJ^2 \\
\big(\frac{b}{\epsilon}\big)^{20/q},  \,  \epsilon J^{q/4} \geq bJ^2
\end{cases}
\end{eqnarray*}
\end{proof}

\bibliographystyle{imsart-nameyear}
\bibliography{Xbib19}

\end{document}